\documentclass[11pt]{article}
\usepackage{amssymb,amsmath,amsfonts,amsthm,mathrsfs}
\usepackage{graphicx,graphics,psfrag,epsfig,calrsfs,cancel}
\usepackage[colorlinks=true, pdfstartview=FitV, linkcolor=blue, citecolor=red, urlcolor=blue]{hyperref}

\usepackage{float}
\usepackage{caption,subfig,color}
\usepackage{tikz}
\usetikzlibrary{calc}

\usepackage{geometry}
\geometry{hmargin=2cm,vmargin=3cm}

\newtheorem{theorem}{Theorem}[section]

\newtheorem{proposition}[theorem]{Proposition}

\newtheorem{assumption}[theorem]{Assumption}
\newtheorem{lemma}[theorem]{Lemma}

\numberwithin{equation}{section}
\numberwithin{figure}{section}

\newcommand{\N}{\mathbb{N}}
\newcommand{\Z}{\mathbb{Z}}
\newcommand{\R}{\mathbb{R}}
\newcommand{\C}{\mathbb{C}}
\newcommand{\eps}{\varepsilon}

\begin{document}

\title{The Neumann numerical boundary condition \\
for transport equations}

\author{Jean-Fran\c{c}ois {\sc Coulombel}\thanks{Institut de Math\'ematiques de Toulouse ; 
UMR5219, Universit\'e de Toulouse ; CNRS, Universit\'e Paul Sabatier, F-31062 Toulouse Cedex 9, 
France. Email: {\tt jean-francois.coulombel@math.univ-toulouse.fr}. Research of the author was supported 
by ANR project BoND, ANR-13-BS01-0009, and by the ANR project NABUCO, ANR-17-CE40-0025.} 
$\,$ \& Fr\'ed\'eric {\sc Lagouti\`ere}\thanks{Universit\'e de Lyon, Universit\'e Claude Bernard Lyon 1, 
Institut Camille Jordan (CNRS UMR5208), 43 boulevard du 11 novembre 1918, F-69622 Villeurbanne Cedex, 
France. Email: {\tt lagoutiere@math.univ-lyon1.fr}. Research of the author was supported by ANR project BoND, 
ANR-13-BS01-0009, and by the ANR project NABUCO, ANR-17-CE40-0025.}}
\date{\today}
\maketitle

\begin{abstract}
In this article, we show that prescribing homogeneous Neumann type numerical boundary conditions at an outflow boundary 
yields a convergent discretization in $\ell^\infty$ for transport equations. We show in particular that the Neumann numerical 
boundary condition is a stable, local, and absorbing numerical boundary condition for discretized transport equations. Our 
main result is proved for explicit two time level numerical approximations of transport operators with arbitrarily wide stencils. 
The proof is based on the energy method and bypasses any normal mode analysis.
\end{abstract}

\noindent {\small {\bf AMS classification:} 65M12, 65M06, 65M20.}

\noindent {\small {\bf Keywords:} transport equations, numerical schemes, Neumann boundary condition, stability, convergence.}


\section{Introduction}

It is a well-known fact that transport equations do not require prescription of any boundary condition at an \emph{outflow} boundary, 
that is, when the transport velocity is outgoing with respect to the boundary of the spatial domain. This can be easily understood by 
integrating the equation along the characteristics. However, many discretizations of the transport equation involve a stencil that 
includes cells of the numerical grid that are located in the \emph{downstream} region. Such discretizations necessitate the prescription 
of numerical boundary conditions at an outflow boundary \cite{gko,strikwerda}, even though the underlying partial differential operator 
does not require any boundary condition for determining the solution.

The construction and analysis of transparent {\em versus} absorbing numerical boundary conditions for wave propagation problems now 
has a very long history, going back at least to the fundamental contribution by Engquist and Majda \cite{engquistmajda}, see also among 
numerous other works \cite{hagstrom,halpern,higdon1,higdon3,gko,aabes} and references therein. Our goal in this article is to rigorously 
justify the commonly acknowledged fact that enforcing Neumann type boundary conditions at an outflow boundary ``\emph{does the job}'', 
in the simple one-dimensional case with a constant velocity (both on the half-line and on an interval). In several respects, the result is 
definitely not new. Stability estimates for Neumann numerical boundary conditions were stated -though without proof- for instance by Kreiss 
\cite{kreissproc}, and a detailed proof of the latter result was later provided by Goldberg \cite{goldberg}. This result now enters the more 
general framework of \cite{goldberg-tadmor}. However, the approach in \cite{kreissproc,goldberg,goldberg-tadmor} is rather elaborate since 
it relies first on the verification of the so-called Uniform Kreiss-Lopatinskii Condition (that is, in the present context of numerical schemes, a 
refined version of the Godunov-Ryabenkii condition \cite{gks,gko}), and then on the application of deep general results which show that the 
latter condition is sufficient for the derivation of optimal semigroup estimates. Such general results first arose in \cite{kreissproc} and were 
later proved in further generality by Kreiss, Osher and followers \cite{kreiss1,osher1,wu,jfcag}. When combined with the trace estimates 
provided by the fulfillment of the Uniform Kreiss-Lopatinskii Condition and the general convergence result of Gustafsson \cite{gustafsson}, 
one gets a complete -though lengthy (!) and somehow unclear as far as the topology is concerned- justification that enforcing Neumann 
type numerical boundary conditions at an outflow boundary yields a convergent scheme for discretized transport equations.

Our goal here is to bypass all the arguments of those previous works that were based on the normal mode analysis and to obtain a more 
direct convergence result \emph{in the $\ell^\infty$ topology} with arguments that are as elementary as possible. Our approach is based 
on the energy method and discrete integration by parts rather than on the Laplace transform. We hope that some of our arguments might 
be useful to deal with more involved problems such as multidimensional hyperbolic systems. The main gain when enforcing the Neumann 
numerical boundary condition with respect, for instance, to the Dirichlet numerical boundary condition is to obtain convergence results in 
$\ell^\infty$, while in the case of the Dirichlet boundary condition at an outflow boundary, despite unconditional stability properties 
\cite{goldberg-tadmor}, boundary layer phenomena allow at best for convergence results in $\ell^p$, $p<+\infty$, only 
\cite{kreisslundqvist,chainais-grenier,bbjfc}. Our main result below gives a rate of convergence in $\ell^\infty$ for such discretizations of the 
transport equations. We do not claim that our rate is optimal, but we do not assume the discretization of the transport equation to be stable 
in $\ell^\infty(\Z)$ either, so if we do not reach optimality we are certainly not too far from it. We plan to study the more favorable case of 
$\ell^\infty$ stable schemes in the future, with the aim of improving the rate of convergence.

The rest of this article is organized as follows. In Section \ref{sect:2}, we introduce some notation and state our main convergence result for 
Neumann numerical boundary conditions at an outflow boundary. Based on the standard approach of numerical analysis, our convergence 
result relies on accurate stability estimates and a consistency analysis. Our main stability estimate, based on a new elementary approach, 
is stated and proved in Section \ref{sect:3}. (A crucial discrete integration by parts lemma is given in Appendix \ref{appA}.) The concluding 
arguments for proving our main result are given in Section \ref{sect:4}.

\section{Main result}
\label{sect:2}

In this article, we consider the following transport problem on an interval. We are given a fixed constant velocity $a>0$, an interval length 
$L>0$ and consider the problem:
\begin{equation}
\label{transport}
\begin{cases}
\partial_t u +a \, \partial_x u =0 \, ,& t \ge 0 \, ,\, x \in (0,L) \, ,\\
u(0,x)=u_0(x) \, ,& x \in (0,L) \, ,\\
u(t,0) =0 \, ,& t \ge 0 \, ,
\end{cases}
\end{equation}
with, at least, $u_0 \in L^2((0,L))$. Actually, further regularity and compatibility requirements will be enforced later on, but let us stick to that 
simple framework for the moment. We could consider an inhomogeneous Dirichlet boundary condition at $x=0$ in \eqref{transport}, but we 
rather stick to this simpler case in order to highlight the main novelty of our approach which rather focuses on the downstream boundary 
$x=L$ of the interval.

The solution to \eqref{transport} is given by the method of characteristics, which yields the explicit representation formula:
\begin{equation}
\label{soltransport}
\forall \, (t,x) \in \R^+ \times (0,L) \, ,\quad u(t,x)=u_0(x-a\, t) \, ,
\end{equation}
where it is understood in \eqref{soltransport} that the initial condition $u_0$ has been extended by zero 
to $\R^-$ (no extension is needed on $(L,+\infty)$ since $a$ is positive and therefore $x-a\, t <L$ for all 
relevant values of $t$ and $x$).

It may seem a too much trivial problem to approximate the problem \eqref{transport} for which an explicit solution is given, but one should 
keep in mind that such a representation formula ceases to be available for hyperbolic \emph{systems} in \emph{several} space dimensions, 
and our goal is to develop analytical tools which do not rely on the fact that \eqref{transport} is a \emph{one-dimensional scalar} problem. We 
therefore consider from now on an approximation of \eqref{transport} by means of a finite difference scheme. We are given a positive integer 
$J$, that is meant to be large, and define accordingly the space step $\Delta x$ and the grid points $(x_j)$ by:
$$
\Delta x :=L/J \, ,\quad x_j := j \, \Delta x \quad (j \in \Z) \, .
$$
The time step $\Delta t$ is then defined as $\Delta t:=\lambda \, \Delta x$ where $\lambda>0$ is a \emph{fixed} constant that is tuned so that 
our main Assumption \ref{as:scheme} below is satisfied. The interval $(0,L)$ is divided in $J$ cells $(x_{j-1},x_j)$, $j=1,\dots,J$, as depicted 
in Figure \ref{fig:maillage}. In what follows, we use the notation $t^n := n \, \Delta t$, $n \in \N$, and $u_j^n$ will play the role of an approximation 
for the solution $u$ to \eqref{transport} at time $t^n$ on the cell $(x_{j-1},x_j)$ (or at the mid-point $(x_{j-1}+x_j)/2$). We do not wish to discriminate 
between finite difference or finite volume schemes for \eqref{transport}, so rather than deriving this or that type of numerical scheme, we consider a 
linear iteration for the $u_j^n$'s that reads in the \emph{interior} domain:
\begin{equation}
\label{transportnum}
u_j^{n+1} =\sum_{\ell=-r}^p a_\ell \, u_{j+\ell}^n \, ,\quad n \in \N \, ,\quad j=1,\dots,J \, .
\end{equation}
In \eqref{transportnum}, $r,p$ are fixed nonnegative integers, and the coefficients $a_\ell$, $\ell=-r,\dots,p$ may only depend on the ratio $\lambda$ 
and the velocity $a$. Most of the usual linear explicit schemes, such as the upwind, Rusanov, Lax-Friedrichs and Lax-Wendroff schemes, 
can be put in that form. The interior domain corresponds to the indices $j=1,\dots,J$. However, in \eqref{transportnum}, the determination 
of $(u_1^{n+1},\dots,u_J^{n+1})$ requires the prior knowledge of $(u_{1-r}^n,\dots,u_{J+p}^n)$, which corresponds to a larger set of cells. 
In what follows the cells $(x_{j-1},x_j)$ with $j=1-r,\dots,0$ and $j=J+1,\dots,J+p$ will be referred to as ``ghost cells''. They are depicted in 
red in Figure \ref{fig:maillage} (in that example, $p=r=2$).

\begin{figure}[h!]
\begin{center}
\begin{tikzpicture}[scale=2,>=latex]
\draw [ultra thin, dotted, fill=blue!20] (0,0) rectangle (5,1.5);
\draw [thin, dashed, fill=white] (0,0) grid [step=0.5] (5,1.5);
\draw [ultra thin, dotted, fill=red!20] (5,0) rectangle (6,1.5);
\draw [thin, dashed, fill=white] (5,0) grid [step=0.5] (6,1.5);
\draw [ultra thin, dotted, fill=red!20] (-1,0) rectangle (0,1.5);
\draw [thin, dashed, fill=white] (-1,0) grid [step=0.5] (0,1.5);
\draw[black,->] (-1.5,0) -- (6.5,0) node[below] {$x$};
\draw[black,->] (0,0)--(0,2) node[right] {$t$};
\draw (0,0.5) node[left, fill=red!20]{$t^1$};
\draw (0,1) node[left, fill=red!20]{$t^2$};
\draw (0,1.5) node[left, fill=red!20]{$t^3$};
\draw (-1,0) node[below]{$x_{-2}$};
\draw (-0.5,0) node[below]{$x_{-1}$};
\draw (0,0) node[below]{$x_0$};
\draw (0,-0.3) node {$0$};
\draw (0.5,0) node[below]{$x_1$};
\draw (1,0) node[below]{$x_2$};
\draw (2.5,0) node[below]{$\cdots$};
\draw (4,0) node[below]{$x_{J-2}$};
\draw (4.5,0) node[below]{$x_{J-1}$};
\draw (5,0) node[below]{$x_J$};
\draw (5,-0.3) node {$L$};
\draw (5.5,0) node[below]{$x_{J+1}$};
\draw (6,0) node[below]{$x_{J+2}$};
\node (centre) at (0,0){$\bullet$};
\node (centre) at (5,0){$\bullet$};
\end{tikzpicture}
\caption{The mesh on $\R^+ \times (0,L)$ in blue, and the ``ghost cells'' in red ($r=p=2$ here).}
\label{fig:maillage}
\end{center}
\end{figure}
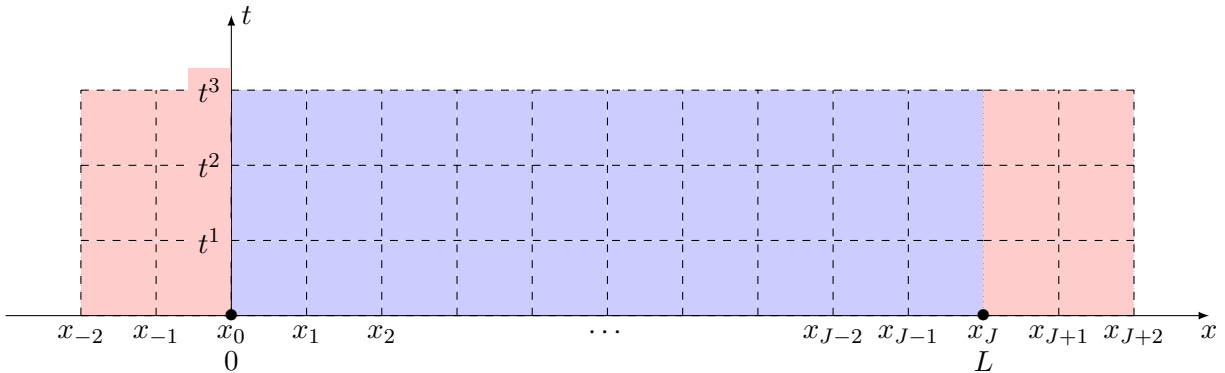

Before describing the numerical boundary conditions we enforce for \eqref{transportnum}, let us state our main and in fact only assumption 
on the coefficients in \eqref{transportnum}.

\begin{assumption}[Consistency and stability]
\label{as:scheme}
The coefficients $a_{-r},\dots,a_p$ in \eqref{transportnum} satisfy $a_{-r} \, a_p \neq 0$ (normalization), and for some integer $k \ge 1$, 
there holds:
\begin{align}
&\forall \, m=0,\dots,k \, ,\quad \sum_{\ell=-r}^p \ell^m \, a_\ell = (-\lambda \, a)^m \, ,\quad 
&\text{\rm (consistency of order $k$)} \, ,\label{eq:consist}\\
&\sup_{\theta \in [0,2\, \pi]} \left| \sum_{\ell=-r}^p a_\ell \, {\rm e}^{i \, \ell \, \theta} \right| \le 1 \, ,\quad 
&\text{\rm ($\ell^2$-stability)} \, .\label{eq:stab}
\end{align}
\end{assumption}

Provided that the relations \eqref{eq:consist} are satisfied for $m=0$ (conservativity) and $m=1$ (consistency of order $1$) with $a>0$, the 
stability assumption \eqref{eq:stab} implies $r \ge 1$, which we assume from now on. Though we view this observation here, as in \cite{strang1}, 
as a \emph{necessary} condition for \emph{stability}, the condition $r \ge 1$ is also known to be \emph{necessary} for \emph{convergence} 
by comparing the numerical and continuous dependency domains, see \cite{cfl}. Let us observe that \eqref{eq:stab} is a necessary and 
sufficient condition for stability of the iteration process \eqref{transportnum} on $\ell^2(\Z)$ in a strong sense, meaning here that the map
$$
(v_j)_{j \in \Z} \longmapsto \left( \sum_{\ell=-r}^p a_\ell \, v_{j+\ell} \right)_{j\in \Z}
$$
is a contraction (it has norm $1$) as an operator on $\ell^2(\Z)$. However, \eqref{eq:stab} is not sufficient to yield stability in $\ell^\infty(\Z)$ for 
\eqref{transportnum}, see \cite{thomee,hedstrom}. Note that through the dependence of the $a_\ell$ with respect to $\lambda = \Delta t/\Delta x$, 
\eqref{eq:stab} is usually intended to be true only under a so-called Courant-Friedrichs-Lewy condition asking for $\lambda$ to be less than some 
constant depending on the scheme and the velocity $a$. (Indeed, the Bernstein inequality for trigonometric polynomials implies $\lambda \, |a| \le 
\max (p,r)$, see \cite{strang1}.) We postpone the extension of our work to $\ell^\infty(\Z)$ stable discretizations to a future work since the methods 
to be used in that framework will definitely need to be different from the more ``Hilbertian'' techniques we use here.

Let us observe that Assumption \ref{as:scheme} does not include any \emph{dissipative} behavior for 
\eqref{transportnum}, meaning that we do not assume a bound of the form:
$$
\forall \, \theta \in [-\pi,\pi] \, ,\quad \left| \sum_{\ell=-r}^p a_\ell \, {\rm e}^{i \, \ell \, \theta} \right| 
\le 1 -c\, \theta^{2\, q} \, ,
$$
for some suitable integer $q$ and positive constant $c$. In that respect, the framework of Assumption 
\ref{as:scheme} is more general than the works \cite{kreiss1,goldberg,gks,osher1} and following works 
that are based on these pioneering results. We thus expect that our approach may be useful to deal 
with multidimensional problems in which dissipativity is most of the time excluded (or restrictive).

If the interior cells of the grid are labeled, as in \eqref{transportnum}, by $j \in \{ 1,\dots,J \}$, the numerical 
approximation of \eqref{transport} requires, for passing from one time index $n$ to the next, prescribing 
$r$ numerical boundary conditions on the left of the interval (that is, close to $x=0$), and $p$ numerical 
boundary conditions on the right (that is, close to $x=L$). In other words, we need to prescribe the value 
of the approximate solution $(u_j^n)$ in the ghost cells located at the boundary of the interior domain. 
For simplicity, and in order to be consistent with the continuous problem \eqref{transport}, we prescribe 
Dirichlet homogeneous boundary conditions in conjunction with \eqref{transportnum} on the left of the 
interval $(0,L)$:
\begin{equation}
\label{dirichlet}
u_\ell^{n} =0 \, ,\quad \ell=1-r,\dots,0 \, .
\end{equation}
On the right of the interval $(0,L)$, there is nothing to be done if $p=0$, that is, in the case of an \emph{upwind} 
discretization, for in that case, given the vector $(u_{1}^{n},\dots,u_J^{n})$, the vector $(u_{1}^{n+1},\dots,u_J^{n+1})$ 
is entirely determined by \eqref{dirichlet} and \eqref{transportnum}, so we can iterate the scheme 
\eqref{transportnum}-\eqref{dirichlet} starting from some initial data $(u_{1}^0,\dots,u_J^0)$ to any positive time level 
$n$. We therefore assume from now on $p \ge 1$, which is the interesting case where the numerical discretization of 
\eqref{transport} necessitates an outflow numerical boundary condition while the continuous problem does not ``obviously'' 
provide with one. In this article, we shall prescribe Neumann type numerical boundary conditions (these are called 
\emph{extrapolation} numerical boundary conditions in \cite{goldberg}). For ease of writing, we introduce the difference 
operator in space which acts on vectors $(v_j)_{j=1-r,\dots,J+p}$ as follows:
$$
\forall \, j=2-r,\dots,J+p \, ,\quad (D_-v)_j :=v_j -v_{j-1} \, .
$$
Higher order difference operators $D_-^m$, $m \ge 2$, are defined accordingly by iterating $D_-$. Then given 
a fixed integer $k_b \in \N$ ($b$ stands for  ``boundary''), we prescribe the following numerical boundary condition 
in conjunction with \eqref{transportnum}:
\begin{equation}
\label{neumann}
(D_-^{k_b} u^{n})_{J+\ell} =0 \, ,\quad \ell=1,\dots,p \, .
\end{equation}
If $k_b=0$, this corresponds to prescribing homogeneous Dirichlet boundary conditions, while if $k_b=1$, 
this corresponds to the standard Neumann numerical boundary condition:
$$
u^{n}_{J+1} =\cdots=u^{n}_{J+p} :=u^{n}_J \, .
$$
The iteration \eqref{transportnum}, \eqref{dirichlet}, \eqref{neumann} thus proceeds as follows, see Figure 
\ref{fig:schema} for an illustration. Given the vector $(u_{1}^n,\dots,u_{J}^n)$ for some time level $n$, one first 
determines the ghost values $(u_{1-r}^{n},\dots,u_0^{n},u_{J+1}^{n},\dots,u_{J+p}^{n})$ by \eqref{dirichlet} and 
\eqref{neumann}. The \emph{new} vector $(u_1^{n+1},\dots,u_J^{n+1})$ is then determined by applying 
\eqref{transportnum}. It is assumed that $J \geq 1$ in order to make the space step $\Delta x=L/J$ meaningful 
and to have at least one cell in the interval $(0,L)$.

\begin{figure}[H]
\begin{center}
\begin{tikzpicture}[scale=2,>=latex]
\draw [ultra thin, dotted, fill=gray!20] (-1,0) rectangle (6,0.5);
\draw [thin, dashed] (-1,0) grid [step=0.5] (6,0.5);
\draw [ultra thin, dotted, fill=gray!20] (0,0.5) rectangle (5,1);
\draw [thin, dashed] (0,0.5) grid [step=0.5] (5,1);
\draw [ultra thin, dotted, fill=red!20] (-1,0) rectangle (0,0.5);
\draw [thin, dashed] (-1,0.5) grid [step=0.5] (0,1);
\draw [thin, dashed] (-1,0) grid [step=0.5] (0,0.5);
\draw [ultra thin, dotted, fill=red!20] (5,0) rectangle (6,0.5);
\draw [thin, dashed] (5,0.5) grid [step=0.5] (6,1);
\draw [thin, dashed] (5,0) grid [step=0.5] (6,0.5);
\draw [thin,<-] (5.3,0.3) arc (0:180:0.55);
\draw [thin,<-] (5.25,0.3) arc (0:180:0.25);
\draw [thin,->] (4.7,0.2) arc (180:360:0.55);
\draw [thin,->] (5.25,0.2) arc (180:360:0.25);
\draw[black,->] (-1.5,0) -- (6.5,0) node[below] {$x$};
\draw[black,->] (0,0)--(0,1.5) node[right] {$t$};
\draw (-1,0.5) node[left]{$t^1$};
\draw (-1,1) node[left]{$t^2$};
\draw (-1,0) node[below]{$x_{-2}$};
\draw (-0.5,0) node[below]{$x_{-1}$};
\draw (0,0) node[below]{$x_0$};
\draw (0.5,0) node[below]{$x_1$};
\draw (1,0) node[below]{$x_2$};
\draw (2.5,0) node[below]{$\cdots$};
\draw (4,0) node[below]{$x_{J-2}$};
\draw (4.5,0) node[below]{$x_{J-1}$};
\draw (5,0) node[below]{$x_J$};
\draw (5.5,0) node[below]{$x_{J+1}$};
\draw (6,0) node[below]{$x_{J+2}$};
\node (centre) at (0,0){$\bullet$};
\node (centre) at (5,0){$\bullet$};
\draw [ultra thin, dotted, fill=gray!20] (-1,-2.5) rectangle (6,-2);
\draw [thin, dashed, fill=white] (-1,-2.5) grid [step=0.5] (6,-2);
\draw [ultra thin, dotted, fill=blue!20] (0,-2) rectangle (5,-1.5);
\draw [thin, dashed, fill=white] (0,-2) grid [step=0.5] (5,-1.5);
\draw[black,->] (-0.75,-2.25) -- (0.15,-1.75);
\draw[black,->] (-0.25,-2.25) -- (0.2,-1.75);
\draw[black,->] (0.25,-2.25) -- (0.25,-1.75);
\draw[black,->] (0.75,-2.25) -- (0.3,-1.75);
\draw[black,->] (1.25,-2.25) -- (0.35,-1.75);
\draw[black,->] (3.75,-2.25) -- (4.65,-1.75);
\draw[black,->] (4.25,-2.25) -- (4.7,-1.75);
\draw[black,->] (4.75,-2.25) -- (4.75,-1.75);
\draw[black,->] (5.25,-2.25) -- (4.8,-1.75);
\draw[black,->] (5.75,-2.25) -- (4.85,-1.75);
\draw[black,->] (-1.5,-2.5) -- (6.5,-2.5) node[below] {$x$};
\draw[black,->] (0,-2.5)--(0,-1) node[right] {$t$};
\draw (-1,-2) node[left]{$t^1$};
\draw (-1,-1.5) node[left]{$t^2$};
\draw (-1,-2.5) node[below]{$x_{-2}$};
\draw (-0.5,-2.5) node[below]{$x_{-1}$};
\draw (0,-2.5) node[below]{$x_0$};
\draw (0.5,-2.5) node[below]{$x_1$};
\draw (1,-2.5) node[below]{$x_2$};
\draw (2.5,-2.5) node[below]{$\cdots$};
\draw (4,-2.5) node[below]{$x_{J-2}$};
\draw (4.5,-2.5) node[below]{$x_{J-1}$};
\draw (5,-2.5) node[below]{$x_J$};
\draw (5.5,-2.5) node[below]{$x_{J+1}$};
\draw (6,-2.5) node[below]{$x_{J+2}$};
\node (centre) at (0,-2.5){$\bullet$};
\node (centre) at (5,-2.5){$\bullet$};
\end{tikzpicture}
\caption{Top: updating iteratively the ghost values at the outflow boundary ($r=p=k_b=2$). Bottom: updating the 
numerical approximation in the interior.}
\label{fig:schema}
\end{center}
\end{figure}
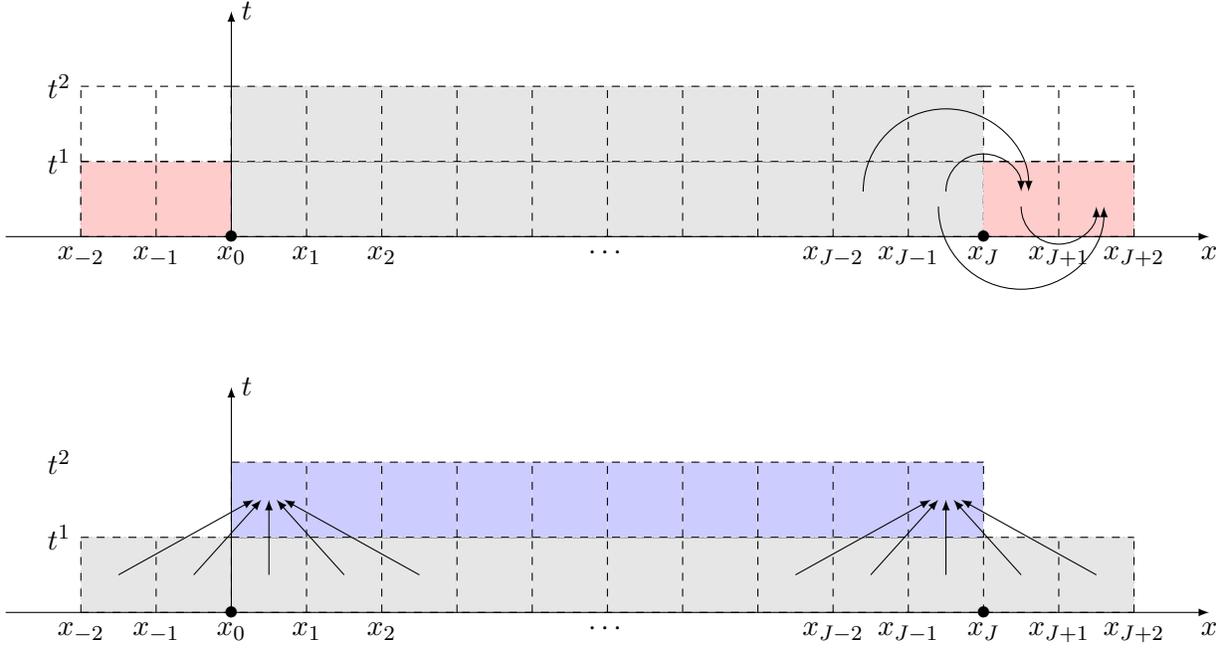

Let us emphasize that the same procedure \eqref{neumann} is applied at each ghost cell close to the outflow 
boundary $x=L$. In the terminology of \cite{goldberg-tadmor1,goldberg-tadmor}, the numerical boundary 
conditions are of \emph{translatory} type.

The scheme \eqref{transportnum}, \eqref{dirichlet}, \eqref{neumann} is initialized with the piecewise constant 
projection of the initial condition for \eqref{transport}, that is, for the interior cells:
\begin{equation}
\label{initial1}
\forall \, j=1,\dots,J \, ,\quad u_j^0 :=\dfrac{1}{\Delta x} \, \int_{(j-1) \, \Delta x}^{j \, \Delta x} u_0(y) \, {\rm d}y \, .
\end{equation}
Our main result is the following convergence estimate for the scheme \eqref{transportnum}, \eqref{dirichlet}, 
\eqref{neumann} supplemented with the initial condition \eqref{initial1}.

\begin{theorem}
\label{thm1}
Let $a>0$, let $k \in \N^*$ and $k_b \in \N$, let $\lambda > 0$ and assume that the coefficients in \eqref{transportnum} 
satisfy Assumption \ref{as:scheme} with integer $k$. Then there exists $C>0$ such that for any $T > 0$ and $J \in \N^*$, 
for any $L \geq 1$, and for any $u_0 \in H^{k+1}((0,L))$ satisfying the compatibility requirements at $x=0$:
$$
\forall \, m=0,\dots,k \, ,\quad u_0^{(m)}(0)=0 \, , 
$$
the solution $(u_j^n)$ to \eqref{transportnum}, \eqref{dirichlet}, \eqref{neumann} with initial datum \eqref{initial1} satisfies:
\begin{equation}
\label{estimlinfty}
\sup_{0 \leq n \leq T/\Delta t} \sup_{j=1,\dots,J} \left| u_j^n -\dfrac{1}{\Delta x} \, \int_{x_{j-1}}^{x_j} u_0(y-a\, t^n) \, {\rm d}y \right| 
\le C \, \big( \sqrt{T}+T \big) \, {\rm e}^{C\, T/L} \, \Delta x^{\min (k,k_b) -1/2} \, \| u_0 \|_{H^{k+1}((0,L))} \, ,
\end{equation}
where $\Delta x = L/J$ and $\Delta t = \lambda \, \Delta x$. It is understood in \eqref{estimlinfty} that $u_0$ is extended by 
$0$ to $\R^-$.
\end{theorem}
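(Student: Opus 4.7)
\emph{Overall strategy.} The plan is the classical consistency-plus-stability argument, combined with a final $\ell^\infty$-vs-$\ell^2$ inverse inequality to convert a natural discrete energy estimate into the advertised $\ell^\infty$ rate. The stability estimate from Section~\ref{sect:3} will be taken as given; the work here consists in setting up the error equation, controlling the truncation errors in the interior and at the outflow boundary, and assembling everything.

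First, I would introduce the cell average of the exact transport solution
$$
U_j^n \, := \, \dfrac{1}{\Delta x} \, \int_{x_{j-1}}^{x_j} u_0(y-a\, t^n) \, {\rm d}y \, ,
$$
(with $u_0$ extended by zero to $\R^-$), and the error $e_j^n := u_j^n - U_j^n$. The compatibility conditions $u_0^{(m)}(0)=0$ for $m \le k$ guarantee that the extended $u_0$ lies in $H^{k+1}(\R)$, so the Taylor expansions performed below are uniform in $n$. Plugging $U$ into the scheme produces an interior consistency defect $\mathcal{T}_j^n$ and a boundary defect $g_{J+\ell}^n := -(D_-^{k_b} U^n)_{J+\ell}$, yielding the error equation
$$
e_j^{n+1} \, = \, \sum_{\ell=-r}^p a_\ell \, e_{j+\ell}^n \, + \, \Delta t \, \mathcal{T}_j^n \, ,\quad j=1,\dots,J \, ,
$$
with $e_\ell^n = 0$ for $\ell = 1-r,\dots,0$ (the inflow Dirichlet condition is \emph{exactly} satisfied by both sides, thanks to the compatibility hypotheses) and the inhomogeneous Neumann data $(D_-^{k_b} e^n)_{J+\ell} = g_{J+\ell}^n$ for $\ell=1,\dots,p$.

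Next I would carry out the consistency analysis by Taylor-expanding $u_0$ and applying the moment identities \eqref{eq:consist}. This produces a pointwise bound $|\mathcal{T}_j^n| \le C\, \Delta x^k \, \|u_0\|_{H^{k+1}(I_j)}$ on suitable local intervals $I_j$, and summing yields $\Delta x \sum_{j=1}^J |\mathcal{T}_j^n|^2 \le C\, \Delta x^{2k} \, \|u_0\|_{H^{k+1}}^2$. The boundary defect is analyzed analogously: since $D_-^{k_b}$ annihilates polynomials of degree $<k_b$, Taylor-expanding the cell averages near $x_J$ gives
$$
|g_{J+\ell}^n| \, \le \, C \, \Delta x^{\min(k,k_b)} \, \|u_0\|_{H^{k+1}} \, ,
$$
the $\min(k,k_b)$ arising because only $k$ derivatives of $u_0$ are pointwise controlled via Sobolev embedding. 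I then invoke the stability estimate of Section~\ref{sect:3} applied to the error system, which, being proved by the energy method and the discrete integration by parts of Appendix~\ref{appA}, should deliver a bound of the generic form
$$
\sup_{0 \le n \le T/\Delta t} \, \Delta x \sum_{j=1}^J |e_j^n|^2 \, \le \, C\, {\rm e}^{C\, T/L} \, \Big( T \, \sup_n \, \Delta x \sum_j |\mathcal{T}_j^n|^2 \, + \, T \, \sup_{n,\ell} |g_{J+\ell}^n|^2 \Big) \, ,
$$
where the mix of $\sqrt T$ and $T$ accumulations (coming from respective $\ell^2$- and $\ell^1$-in-time summations of source and trace terms) produces the $\sqrt T + T$ factor in \eqref{estimlinfty}. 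Substituting the consistency bounds yields $\sup_n \Delta x \sum_j |e_j^n|^2 \le C\, (\sqrt T + T)^2 \, {\rm e}^{2C\, T/L}\, \Delta x^{2\min(k,k_b)} \, \|u_0\|_{H^{k+1}}^2$, and the inverse embedding $\|e^n\|_{\ell^\infty} \le \Delta x^{-1/2} \, \bigl(\Delta x \sum_j |e_j^n|^2\bigr)^{1/2}$ closes the argument and matches the $\Delta x^{\min(k,k_b)-1/2}$ rate in \eqref{estimlinfty}.

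The principal difficulty is the boundary treatment: since Assumption~\ref{as:scheme} includes no dissipativity, the inhomogeneous Neumann trace data $g_{J+\ell}^n$ must be absorbed by the stability estimate of Section~\ref{sect:3} without generating any uncontrolled growth. This is precisely where the discrete integration by parts of Appendix~\ref{appA} enters: it will yield the trace control that renders the condition $(D_-^{k_b} e^n)_{J+\ell} = g_{J+\ell}^n$ compatible with the energy method, at the price of the $e^{CT/L}$ factor that reflects the many characteristic crossings of the domain. Once the stability estimate is in hand, the insertion of the consistency bounds and the final inverse inequality are routine.
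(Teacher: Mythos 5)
Your overall architecture (consistency $+$ stability $+$ the crude $\Delta x^{-1/2}$ inverse inequality) is the right one, and your consistency analysis is essentially correct: the inflow defect vanishes exactly thanks to the compatibility conditions, the interior defect is $O(\Delta x^k)$ in $\ell^2_j$, and the outflow defect is $O(\Delta x^{\min(k,k_b)})$. But there is a genuine gap at the central step. The stability estimate you ``take as given'' --- a bound for the \emph{interval} problem $(0,L)$ with inhomogeneous Neumann data at the outflow, an interior source term, and validity up to an arbitrary time $T/\Delta t$ with only $e^{CT/L}$ growth --- is proved nowhere in Section~\ref{sect:3} and does not follow from it. Theorem~\ref{thm2} and Proposition~\ref{prop1} concern the \emph{half-line} $(-\infty,L)$ only; the interval estimate of Proposition~\ref{prop3} is proved only for \emph{homogeneous} boundary conditions and \emph{no} forcing, and its proof (finite speed of propagation, then iteration over time blocks of length $N_0\sim cJ$) does not hand you the inhomogeneous version for free. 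Since establishing stability is precisely the hard and novel part of the paper, asserting this estimate is not a permissible shortcut.

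The paper's route around this is the ingredient your proposal is missing: one splits $u_0=\chi(\cdot/L)u_0+(1-\chi(\cdot/L))u_0$ and uses finite speed of propagation to write $u_j^n=v_j^n+w_j^n$ for $n\le N\sim cJ$, where $v$ solves the half-line outflow problem \eqref{schemademidroite} (handled by Theorem~\ref{thm3}, which \emph{does} absorb the inhomogeneous Neumann defects via Proposition~\ref{prop1}) and $w$ solves the half-line inflow problem \eqref{schemademidroite'} (Proposition~\ref{prop2}). This gives the convergence estimate \eqref{estimthm1-2} on one time block only; one then restarts at time $N$ with the exact cell averages as data, propagates the accumulated error $\eps_j$ through the \emph{homogeneous} interval stability estimate \eqref{estiminterval}, and iterates over $m={\rm E}(T/(N\Delta t))+1$ blocks. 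The factor $e^{CT/L}$ arises from this geometric accumulation over blocks, not, as you suggest, from the trace control furnished by the discrete integration by parts (that is what makes the half-line estimate work in the first place). A secondary inaccuracy: the interior source enters Proposition~\ref{prop1} with weight $(N\Delta t)^2=T^2$, not $T$, which is what produces the $T$ (rather than $\sqrt T$) contribution after taking square roots. To repair your proof you would either have to prove the inhomogeneous interval stability estimate yourself --- which amounts to redoing the paper's decomposition and block iteration --- or adopt that decomposition directly.
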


The restriction to numerical schemes with two time levels only in \eqref{transportnum} is necessary since, for instance, the 
Neumann numerical boundary condition \eqref{neumann} is known to yield violent instabilities when used in conjunction with 
the leap-frog scheme \cite{trefethenCPAM}. We postpone the study of outflow numerical boundary conditions for general 
multistep schemes to a future work.

The integer $k_b$ in \eqref{neumann} prescribes the approximation order of the numerical outflow boundary condition. In particular, 
Theorem \ref{thm1} shows that the Neumann numerical boundary condition ($k_b=1$) gives a local, stable hence convergent way 
to approximate the exact discrete transparent boundary condition for \eqref{transportnum} which is nonlocal in time (see \cite{jfcAFST} 
for a general derivation of discrete transparent boundary conditions).

There is a ``loss'' of $1/2$ in the somehow expected rate of convergence $\min (k,k_b)$ in \eqref{estimlinfty}. 
We emphasize once again that Assumption \ref{as:scheme} does not imply stability of \eqref{transportnum} in 
$\ell^\infty(\Z)$ and therefore even on the whole real line, the rate of convergence $k$ in $\ell^\infty$ cannot be 
attained in general. With the additional technical complexity of dealing with numerical boundary conditions, we 
view this loss of $1/2$ as a minor disagreement of our method.

Applying estimate \eqref{estimlinfty} to the ``theoretical'' final time $T:=L/a$ after which the exact solution to the 
transport equation \eqref{transport} becomes zero (everything has flowed out of the interval $(0,L)$ through the 
outflow boundary $x=L$), we obtain:
$$
\sup_{j=1,\dots,J} \left| u_j^{N+1} \right| \lesssim \Delta x^{\min (k,k_b) -1/2} \, ,\quad N :={\rm E} \left( \dfrac{L}{a \, \Delta t} \right) \, ,
$$
meaning that the numerical approximation of the solution to \eqref{transport} has become uniformly small 
on the interval $(0,L)$ at time level $N+1$. At later times, stability estimates for the numerical scheme 
\eqref{transportnum}, \eqref{dirichlet}, \eqref{neumann}, which we shall prove below, assert that the solution 
$(u_j^n)$ to \eqref{transportnum}, \eqref{dirichlet}, \eqref{neumann} remains ``small'' (at least on any given 
finite time interval, $J$ being large).

\section{Preliminary analysis on a half-line}
\label{sect:3}

In this section, we show a preliminary result, which is Theorem \ref{thm3} below, that is entirely analogous 
to our main result, Theorem \ref{thm1} above, except that the space domain is a half-line with an outgoing 
velocity at the boundary. The result is interesting on its own since a byproduct of our analysis is the verification 
of the so-called Uniform Kreiss Lopatinskii Condition though we completely bypass the normal mode analysis 
that is commonly used to verify it \cite{goldberg}. The convergence analysis on the half-line is more classical. 
In addition to its own interest, which somehow focuses on the Neumann boundary condition without coupling 
it to Dirichlet boundary condition, Theorem \ref{thm3} below will be a major building block in the proof of 
Theorem \ref{thm1}.

\subsection{Stability estimates for Neumann numerical outflow boundary conditions}

In this paragraph, we prove Theorem \ref{thm2} below that provides us with stability estimates 
for the Neumann numerical boundary condition on a half-line. Theorem \ref{thm2} is a key tool for proving 
stability estimates for the scheme \eqref{transportnum}, \eqref{dirichlet}, \eqref{neumann} on a finite interval, 
which in turn yields the convergence result of Theorem \ref{thm1}. Let us recall that Theorem \ref{thm2} 
below is already known to hold true thanks to the joint results of \cite{kreissproc,goldberg,kreiss1} and 
in a more general setting \cite{wu,jfcag,jfcnotes} (see also references therein). It roughly means that the 
Neumann numerical boundary conditions satisfy the Uniform Kreiss Lopatinskii Condition at an outflow 
boundary. However we emphasize that Assumption \ref{as:scheme} does not imply ``dissipativity'' for 
\eqref{transportnum}, which already prohibits using the main results in \cite{kreiss1,gks} and therefore  
strongly advocates using the energy method, as we shall do, whenever possible. Independently of the 
subtleties of the assumptions in those works, we give here an elementary proof of the stability result in 
\cite{kreissproc,goldberg} without any Laplace transform nor ``GKS'' type arguments, and with more easily 
checkable assumptions. Our integration by parts approach was probably already well-known for three point 
schemes and the classical Neumann boundary condition $D_-u=0$, but our main contribution is to show that 
the energy method can be used in order to deal with numerical schemes with an arbitrarily wide stencil and 
for any extrapolation order at the boundary.

Before going on, let us fix the space domain that we consider. Since we deal with a constant coefficient 
linear problem, by translation invariance, there is no loss of generality in considering the half-line $(-\infty,L)$. 
With the same positive constant $\lambda>0$ as in the previous section, we consider $J \in \N^*$, and some 
space and time steps $\Delta x = L/J$ and $\Delta t = \lambda \, \Delta x$, assuming that $J$ is large enough 
to ensure that $\Delta t \in (0,1]$. We also keep the notation $t^n:=n\, \Delta t$, $n \in \N$ and $x_j:=j\, \Delta x$, 
$j \in \Z$. The grid and the associated ghost cells are depicted in Figure \ref{fig:maillage'}. Our result can be stated 
as follows.

\begin{figure}[h!]
\begin{center}
\begin{tikzpicture}[scale=2,>=latex]
\draw [ultra thin, dotted, fill=blue!20] (0,0) rectangle (5.2,1.2);
\draw [thin, dashed, fill=white] (0,0) grid [step=0.4] (5.2,1.2);
\draw [ultra thin, dotted, fill=red!20] (5.2,0) rectangle (6,1.2);
\draw [thin, dashed, fill=white] (5.2,0) grid [step=0.4] (6,1.2);
\draw [ultra thin, dotted, fill=blue!20] (-1.4,0) rectangle (0,1.2);
\draw [thin, dashed, fill=white] (-1.4,0) grid [step=0.4] (0,1.2);
\draw[black,->] (-1.5,0) -- (6.5,0) node[below] {$x$};
\draw[black,->] (0.4,0)--(0.4,2) node[right] {$t$};
\draw (0,0.4) node[left, fill=blue!20]{$t^1$};
\draw (0,0.8) node[left, fill=blue!20]{$t^2$};
\draw (-1.2,0) node[below]{$\cdots$};
\draw (-0.8,0) node[below]{$x_{-3}$};
\draw (-0.4,0) node[below]{$x_{-2}$};
\draw (0,0) node[below]{$x_{-1}$};
\draw (0.4,0) node[below]{$x_0$};
\draw (0.8,0) node[below]{$x_1$};
\draw (2.2,0) node[below]{$\cdots$};
\draw (4.4,0) node[below]{$x_{J-2}$};
\draw (4.8,0) node[below]{$x_{J-1}$};
\draw (5.2,0) node[below]{$x_J$};
\draw (5.2,-0.3) node {$L$};
\draw (5.6,0) node[below]{$x_{J+1}$};
\draw (6,0) node[below]{$x_{J+2}$};
\node (centre) at (5.2,0){$\bullet$};
\end{tikzpicture}
\caption{The mesh on $\R^+ \times (-\infty,L)$ in blue, and the ``ghost cells'' in red ($p=2$ here).}
\label{fig:maillage'}
\end{center}
\end{figure}

\begin{theorem}
\label{thm2}
Let $a>0$, let $k\ge 1$ and $k_b \in \N$, let $\lambda > 0$. Then there exists a constant $C>0$ such that for all initial data 
$(f_j)_{j \le J} \in \ell^2$ and for all boundary source terms $(g_{J+1}^n)_{n \ge 0},\dots,(g_{J+p}^n)_{n \ge 0}$ verifying the 
growth condition:
$$
\forall \, \Gamma >0 \, ,\quad \sum_{n \ge 0} {\rm e}^{-2\, \Gamma \, n} \, \Big( (g_{J+1}^n)^2 +\cdots +(g_{J+p}^n)^2 \Big) < +\infty \, ,
$$
the solution $(u_j^n)_{j \le J+p,n\in \N}$ to the scheme:
\begin{equation}
\label{pbauxk}
\begin{cases}
u_j^0=f_j \, ,& j\le J\, ,  \\
(D_-^{k_b} u^{n})_{J+\ell} =g_{J+\ell}^{n} \, ,& n \in \N \, ,\quad \ell=1,\dots,p \, ,\\
u_j^{n+1} ={\displaystyle \sum_{\ell=-r}^p} a_\ell \, u_{j+\ell}^n \, ,& n \in \N \, ,\quad j \le J \, ,\\
\end{cases}
\end{equation}
where the coefficients $a_\ell$ satisfy Assumption \ref{as:scheme} with integer $k$, satisfies the estimate:
\begin{multline}
\label{estimation}
\sup_{n \in \N} \, \left({\rm e}^{-2\, \gamma \, n\, \Delta t} \, \sum_{j \le J} \, \Delta x \, (u_j^n)^2 \right)
+\sum_{n\ge 0} \, \Delta t \, {\rm e}^{-2\, \gamma \, n\, \Delta t} \, \sum_{\ell=1-r-k_b}^p (u_{J+\ell}^n)^2 \\
\le C \, \left\{ \sum_{j \le J} \, \Delta x \, (f_j)^2 
+\sum_{n\ge 0} \, \Delta t \, {\rm e}^{-2\, \gamma \, n\, \Delta t} \, \sum_{\ell=1}^p (g_{J+\ell}^n)^2 \right\} 
\end{multline}
for any $\gamma > 0$, and for any $\Delta t$ and $\Delta x$ such that $\Delta t/\Delta x = \lambda$ and $\Delta t \in (0, 1]$. 
In particular, the numerical boundary conditions in \eqref{pbauxk} satisfy the Uniform Kreiss Lopatinskii Condition.
\end{theorem}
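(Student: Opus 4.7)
My plan is to carry out a discrete energy method directly on the half-line $\{j\le J\}$, bypassing any Laplace/normal-mode analysis. Concretely, I aim for a one-step inequality of the form
\begin{equation*}
\sum_{j \le J}(u^{n+1}_j)^2 - \sum_{j \le J}(u^n_j)^2 + c \sum_{\ell=1-r-k_b}^p (u^n_{J+\ell})^2 \;\le\; C \sum_{\ell=1}^p (g^n_{J+\ell})^2,
\end{equation*}
with constants $c, C > 0$ depending only on $(a_\ell)_{\ell=-r}^p$ and $k_b$. Once such an inequality is in hand, multiplying by $\mathrm e^{-2\gamma n\Delta t}$ and Abel-summing over $n\ge 0$ yields \eqref{estimation} in a standard way, uniformly in $\gamma > 0$ and $\Delta t \in (0,1]$ (using $1 - \mathrm e^{-2\gamma\Delta t} \ge 0$ and $\Delta t \le 1$ to extract constants that are uniform in $\gamma$); the Uniform Kreiss--Lopatinskii statement is then just a rephrasing of the trace term on the left-hand side.

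To derive the one-step inequality, I would start from $(u^{n+1}_j)^2 = (Qu^n)_j^2$ summed over $j \le J$, where $Q$ denotes the symbol operator $(Qv)_j = \sum_\ell a_\ell v_{j+\ell}$. The symbol bound $|\hat a(\theta)| \le 1$—which is equivalent, via Plancherel, to $\ell^2(\Z)$-contractivity of $Q$—combined with the discrete integration by parts of Appendix~\ref{appA}, should produce an identity of the shape
\begin{equation*}
\sum_{j \le J}(u^{n+1}_j)^2 \;=\; \sum_{j \le J}(u^n_j)^2 + \sum_{\ell=1}^p (u^n_{J+\ell})^2 - \mathcal D[u^n] + \mathcal B\bigl[(u^n_{J+\ell})_{\ell=1-r-k_b,\dots,p}\bigr],
\end{equation*}
where $\mathcal D[u^n] \ge 0$ is a non-negative interior quadratic form produced by the Fej\'er--Riesz factorisation of $1 - |\hat a(\theta)|^2$, and $\mathcal B$ is an explicit symmetric bilinear form in the boundary traces. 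The ghost values $u^n_{J+1}, \dots, u^n_{J+p}$ are then eliminated by inverting the lower-triangular Neumann system $(D_-^{k_b} u^n)_{J+\ell} = g^n_{J+\ell}$: each ghost value becomes an explicit linear combination of the interior traces $u^n_{J+1-k_b}, \dots, u^n_J$ and of $g^n_{J+1}, \dots, g^n_{J+\ell}$, and substitution turns $\mathcal B$ into a quadratic form $\mathcal Q_n$ in the interior boundary traces, plus cross-terms and a quadratic contribution driven by $g^n$.

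The main obstacle will be the algebraic verification that $\mathcal Q_n$ is coercive, $\mathcal Q_n \ge c \sum_{\ell=1-r-k_b}^p (u^n_{J+\ell})^2$. This coercivity is precisely the Uniform Kreiss--Lopatinskii Condition for the Neumann boundary condition at an outflow, classically checked via normal modes on $\hat a$, but which here must be obtained directly from Assumption~\ref{as:scheme}. The key ingredients I would rely on are the sign carried by the first-moment identity $\sum_\ell \ell\, a_\ell = -\lambda a < 0$ coming from \eqref{eq:consist} with $a > 0$ (the algebraic translation of ``outflow''), combined with the telescoping structure of $D_-^{k_b}$ and the explicit form of the Fej\'er--Riesz factor arising in the integration by parts of Appendix~\ref{appA}. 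Once coercivity is secured, the source cross-terms are absorbed by a Young inequality and the one-step bound is complete; the full statement then assembles as described in the first paragraph.
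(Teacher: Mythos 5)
Your toolkit (energy method, the discrete integration by parts of Appendix \ref{appA}, the sign of $\sum_\ell \ell\, a_\ell=-\lambda a$, Young's inequality, weighted Abel summation) is the right one, but the architecture of your argument has a genuine gap: the one-step inequality you take as your target,
$$
\sum_{j\le J}(u_j^{n+1})^2-\sum_{j\le J}(u_j^n)^2+c\sum_{\ell=1-r-k_b}^{p}(u_{J+\ell}^n)^2\le C\sum_{\ell=1}^p(g_{J+\ell}^n)^2 \, ,
$$
is false in general for $k_b\ge 1$, so the coercivity of $\mathcal Q_n$ that you defer as ``the main obstacle'' cannot be established. With $g\equiv 0$, your inequality would force the one-step solution operator with homogeneous Neumann conditions to be an $\ell^2$ contraction (it reads $\|Au\|^2\le\|u\|^2-c\,\|Bu\|^2\le\|u\|^2$); but already for $k_b=2$ the $\ell^2$ induced norm of this operator exceeds $1$ (see Table \ref{norms} and the discussion in Section \ref{stable_operator}: stability cannot be obtained by showing that the $\ell^2$ norm does not increase). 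There is also a structural obstruction to the coercivity itself: after eliminating the ghost values, the boundary quadratic form produced by Lemma \ref{ippdiscrete} involves only $u_{J+1-r}^n,\dots,u_J^n$ together with $u_{J+1-k_b}^n,\dots,u_J^n$ coming from the elimination, whereas your claimed trace sum reaches down to $u_{J+1-r-k_b}^n$, a variable on which $\mathcal Q_n$ does not depend; and the only sign-definite contribution supplied by the consistency assumption is the rank-one flux term $-\lambda a\,(u_J^n)^2$, which cannot dominate the indefinite cross terms involving the interior differences $u_{J+\ell}^n-u_{J+\ell-1}^n$ at a fixed time step.

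The missing idea is the induction on $k_b$. What the energy identity actually yields at one step is \eqref{energy3}: control of $(u_J^n)^2$ alone, with the squared differences $(u_{J+\ell}^n-u_{J+\ell-1}^n)^2$ sitting on the \emph{right-hand} side. These differences are not bounded by the data at a fixed $n$; they are controlled only after summation in time, by observing that $w^n:=D_-u^n$ solves the same scheme with the extrapolation order lowered from $k_b+1$ to $k_b$ and the same boundary sources, so that the already-established estimate at order $k_b$ applies to $w$ and gives \eqref{estimthm2-3}. The full trace range $\ell=1-r-k_b,\dots,p$ in \eqref{estimation} is then reconstructed by telescoping from $(u_J^n)^2$ and the controlled differences; it is a genuinely time-integrated statement, not a per-step one. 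A secondary point: your assumption that the interior dissipation $\mathcal D[u^n]\ge 0$ on the half-line is also unjustified, since the coefficients $d_\ell$ of Lemma \ref{ippdiscrete} are not individually sign-definite (only the full sum over $j\in\Z$ is $\le 0$, see \eqref{energy4}), which is why the paper needs the reflection argument around $j=J+p$ in \eqref{energy5}.
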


Before proving Theorem \ref{thm2}, let us recall that we always assume the ratio $\Delta t/\Delta x$ to be constant. This 
will be used several times below and is reminiscent of the scale invariance properties of the underlying continuous problem. 
Observe also that in \eqref{estimation}, the larger the integer $k_b$ the better the trace estimate on the left hand side behaves. 
In particular, the numerical boundary conditions in \eqref{pbauxk} involve the values $u_{J+1-k_b}^{n},\dots,u_{J+p}^{n}$, and 
we get ``for free'' in \eqref{estimation} not only the control of those terms but also the extra control of $u_{J+1-r-k_b}^{n}, \dots, 
u_{J-k_b}^{n}$ (recall $r \ge 1$). The fact that \eqref{estimation} implies the Uniform Kreiss Lopatinskii Condition is not our main 
focus here, so instead of recalling many definitions, we rather refer the interested reader to the review \cite{jfcnotes}.

\begin{proof}
We shall use Assumption \ref{as:scheme} in the proof below only for $k=1$, that is, we make the ``minimal'' 
consistency requirements for the scheme \eqref{transportnum}. Unlike \cite{kreissproc,goldberg}, the proof 
of Theorem \ref{thm2} is done by \emph{induction} with respect to the index $k_b \in \N$ and relies on the 
\emph{energy method}. Let us start with the case $k_b=0$, which corresponds to Dirichlet numerical boundary 
conditions.
\bigskip

$\bullet$ \underline{The case $k_b=0$}. We consider the numerical scheme:
\begin{equation}
\label{pbauxDir}
\begin{cases}
u_j^0=f_j \, ,& j\le J \, , \\
u^{n}_{J+\ell} =g_{J+\ell}^{n} \, ,& n \in \N \, ,\quad \ell=1,\dots,p \, ,\\
u_j^{n+1} ={\displaystyle \sum_{\ell=-r}^p} a_\ell \, u_{j+\ell}^n \, ,& n \in \N \, ,\quad j \le J \, .\\
\end{cases}
\end{equation}
A straightforward proof of the stability estimate \eqref{estimation} for $k_b=0$ was achieved in \cite{jfcag} 
(even in some cases of multidimensional systems), see also \cite{goldberg-tadmor} for an earlier general 
result based on the theory of \cite{gks}. We reproduce here the short proof of \eqref{estimation} for the 
scheme \eqref{pbauxDir} for the sake of completeness.

Assumption \ref{as:scheme} implies by the Plancherel Theorem that the mapping:
\begin{equation}
\label{contraction}
(v_j)_{j \in \Z} \in \ell^2 \longmapsto \left( \sum_{\ell=-r}^p a_\ell \, v_{j+\ell} \right)_{j \in \Z} \in \ell^2 \, ,
\end{equation}
is a contraction, in the sense that the operator norm is not larger than $1$. Let us now consider the solution 
$(u_j^n)_{j \le J+p,n\in \N}$ to \eqref{pbauxDir} at some time index $n \in \N$. We extend the sequence 
$(u_j^n)_{j \le J+p}$ by $0$ for $j \ge J+p+1$ and still denote $u^n \in \ell^2(\Z)$ the resulting sequence. 
Let us then define
$$
\forall \, j \in \Z \, ,\quad v_j^{n+1} := \sum_{\ell=-r}^p a_\ell \, u_{j+\ell}^n \, ,
$$
so that $v_j^{n+1}=u_j^{n+1}$ for $j \le J$, and $v_j^{n+1}=0$ if $j \ge J+p+r+1$. Observe that, due to the boundary 
conditions in \eqref{pbauxDir}, we do not necessarily have $v_j^{n+1}=u_j^{n+1}$ for $j=J+1,\dots,J+p$, nor for 
$j=J+p+1,\dots,J+p+r$ (extending also $(u_j^{n+1})_{j \le J+p}$ by $0$ for $j \ge J+p+1$). Using Assumption 
\ref{as:scheme}, we get:
$$
\sum_{j \le J} \, \Delta x \, (u_j^{n+1})^2 +\sum_{j=J+1}^{J+p+r} \, \Delta x \, (v_j^{n+1})^2 
=\sum_{j \in \Z} \, \Delta x \, (v_j^{n+1})^2 \le \sum_{j \in \Z} \, \Delta x \, (u_j^n)^2 
=\sum_{j \le J+p} \, \Delta x \, (u_j^n)^2 \, ,
$$
since \eqref{contraction} is a contraction. Equivalently, taking the boundary conditions of \eqref{pbauxDir} into account, 
we get:
\begin{equation}
\label{estimthm2-1}
\underbrace{\sum_{j \le J} \, \Delta x \, (u_j^{n+1})^2 
-\sum_{j \le J} \, \Delta x \, (u_j^n)^2}_{\text{\rm Discrete time derivative}} 
+\underbrace{\Delta x \, \sum_{\ell=1}^{p+r} \, (v_{J+\ell}^{n+1})^2}_{\text{\rm Trace term}} 
\le \underbrace{\Delta x \, \sum_{\ell=1}^p \, (g_{J+\ell}^n)^2}_{\text{\rm Source term}}  \, .
\end{equation}
We now derive a bound from below for the trace term arising on the left hand side of \eqref{estimthm2-1}. 
The real numbers $v_{J+\ell}^{n+1}$, $\ell=1,\dots,p+r$, depend linearly on $u_{J+1-r}^n,\dots,u_{J+p}^n$. 
The coefficients in each linear combination are taken among the $a_\ell$'s. Hence the quantity
$$
\sum_{\ell=1}^{p+r} \, (v_{J+\ell}^{n+1})^2 
$$
can be seen as a nonnegative quadratic form in the variables $u_{J+1-r}^n,\dots,u_{J+p}^n$. It is also rather 
easy to see that this quadratic form is \emph{positive definite} for we have $v_{J+p+r}^{n+1}=a_{-r} \, u_{J+p}^n$, 
and, therefore, if $v_{J+1}^{n+1} =\cdots =v_{J+p+r}^{n+1} =0$, then we first have $u_{J+p}^n=0$ and 
recursively we can also show $u_{J+p-1}^n =\cdots =u_{J+1-r}^n =0$. Hence there exists a fixed constant 
$c_0>0$, that only depends on the (fixed) coefficients $a_\ell$ in \eqref{transportnum}, such that:
$$
\sum_{\ell=1}^{p+r} \, (v_{J+\ell}^{n+1})^2 \ge c_0 \, \sum_{\ell=1-r}^p \, (u_{J+\ell}^n)^2 \, .
$$
Reporting in \eqref{estimthm2-1} and using that $\Delta t/\Delta x =\lambda$ is a fixed positive constant, 
we get for some constant $c>0$ ($c = c_0/\lambda$ is suitable):
\begin{equation}
\label{estimthm2-2}
\sum_{j \le J} \, \Delta x \, (u_j^{n+1})^2 -\sum_{j \le J} \, \Delta x \, (u_j^n)^2 
+c \, \Delta t \, \sum_{\ell=1-r}^p \, (u_{J+\ell}^n)^2 \le \dfrac{1}{\lambda} \, \Delta t \, 
\sum_{\ell=1}^p \, (g_{J+\ell}^n)^2 \, .
\end{equation}
We now apply the following discrete Gronwall type lemma (with the positive parameter $\Gamma := \gamma 
\, \Delta t$), see \cite{jfcag} for repeated use of such summation arguments.

\begin{lemma}
\label{gronwall}
Let $({\mathcal G}_n)_{n \ge 0}$ be a sequence of nonnegative real numbers such that:
$$
\forall \, \Gamma >0 \, ,\quad \sum_{n \ge 0} {\rm e}^{-2\, \Gamma \, n} \, {\mathcal G}_n <+\infty \, .
$$
Let $({\mathcal U}_n)_{n \ge 0}$, $({\mathcal B}_n)_{n \ge 0}$ be two sequences of nonnegative real 
numbers such that:
$$
\forall \, n \in \N \, ,\quad {\mathcal U}_{n+1} -{\mathcal U}_n +{\mathcal B}_n \le {\mathcal G}_n \, .
$$
Then there holds for all $\Gamma>0$:
$$
\sup_{n \in \N} \,  {\rm e}^{-2\, \Gamma \, n} \, {\mathcal U}_n 
+\sum_{n \ge 0} \, {\rm e}^{-2\, \Gamma \, n} \, {\mathcal B}_n \le {\mathcal U}_0 
+\sum_{n \ge 0} \, {\rm e}^{-2\, \Gamma \, n} \, {\mathcal G}_n \, .
$$
\end{lemma}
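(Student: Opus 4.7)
The plan is to adapt the classical telescoping argument for continuous Gronwall inequalities to the weighted discrete setting. The essential observation is that since $\Gamma>0$, one has the elementary inequality $e^{-2\,\Gamma\,(n+1)} \le e^{-2\,\Gamma\,n}$, so weighting the hypothesis appropriately produces a genuinely telescoping inequality in the weighted quantities without any loss in the energy term $\mathcal{U}_n$.

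First, I would multiply the hypothesis $\mathcal{U}_{n+1}-\mathcal{U}_n+\mathcal{B}_n \le \mathcal{G}_n$ by $e^{-2\,\Gamma\,n}$; then, using that $\mathcal{U}_{n+1}\ge 0$ together with $e^{-2\,\Gamma\,(n+1)} \le e^{-2\,\Gamma\,n}$, I would replace the weight in front of $\mathcal{U}_{n+1}$ to obtain the (weaker but telescoping) inequality
$$
e^{-2\,\Gamma\,(n+1)}\,\mathcal{U}_{n+1} \;-\; e^{-2\,\Gamma\,n}\,\mathcal{U}_n \;+\; e^{-2\,\Gamma\,n}\,\mathcal{B}_n \;\le\; e^{-2\,\Gamma\,n}\,\mathcal{G}_n .
$$
Summing this from $n=0$ to $n=N-1$ for arbitrary $N\ge 1$, the first two terms telescope exactly, leaving
$$
e^{-2\,\Gamma\,N}\,\mathcal{U}_N \;+\; \sum_{n=0}^{N-1} e^{-2\,\Gamma\,n}\,\mathcal{B}_n \;\le\; \mathcal{U}_0 \;+\; \sum_{n=0}^{N-1} e^{-2\,\Gamma\,n}\,\mathcal{G}_n \;\le\; \mathcal{U}_0 \;+\; \sum_{n \ge 0} e^{-2\,\Gamma\,n}\,\mathcal{G}_n ,
$$
where the last step uses $\mathcal{G}_n\ge 0$ and the convergence of the weighted series provided by the growth assumption on $(\mathcal{G}_n)$. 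The case $N=0$ is trivial since the LHS reduces to $\mathcal{U}_0$.

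Because the right-hand side is independent of $N$ and both terms on the left are nonnegative, taking the supremum over $N$ in the first term and letting $N\to\infty$ in the second (by monotone convergence of partial sums of nonnegative terms) yields the claimed estimate. No genuine obstacle is expected; the only subtlety is the bookkeeping that guarantees the weights telescope cleanly, which is exactly the role of the trick of shifting the weight from $e^{-2\,\Gamma\,n}$ to $e^{-2\,\Gamma\,(n+1)}$ in front of $\mathcal{U}_{n+1}$. The scale invariance $\Gamma=\gamma\,\Delta t$ chosen in the application to Theorem \ref{thm2} then translates the bound into the continuous-in-time exponential weights appearing in \eqref{estimation}.
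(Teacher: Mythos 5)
Your telescoping argument is precisely the ``straightforward'' proof that the paper omits, and its core is correct: multiplying the hypothesis by ${\rm e}^{-2\Gamma n}$, downgrading the weight on $\mathcal{U}_{n+1}$ to ${\rm e}^{-2\Gamma(n+1)}$ via $\mathcal{U}_{n+1}\ge 0$, and summing gives, for every $N$, the bound ${\rm e}^{-2\Gamma N}\mathcal{U}_N+\sum_{n=0}^{N-1}{\rm e}^{-2\Gamma n}\mathcal{B}_n\le \mathcal{U}_0+\sum_{n\ge 0}{\rm e}^{-2\Gamma n}\mathcal{G}_n=:R$. One caveat about your final step: from ``$A_N+B_N\le R$ for all $N$'' you may conclude $\sup_N A_N\le R$ and $\lim_N B_N\le R$ \emph{separately}, but not $\sup_N A_N+\lim_N B_N\le R$; indeed, taking $\mathcal{G}\equiv 0$, $\mathcal{U}_0=\mathcal{B}_0=1$ and all other terms zero satisfies the hypotheses, yet the left-hand side of the conclusion equals $2$ while the right-hand side equals $1$. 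Thus the inequality as literally stated in Lemma \ref{gronwall} only holds with an extra factor $2$ on the right (equivalently, with the sum on the left replaced by a maximum); this is a defect of the statement rather than of your argument, and it is harmless in context since the factor is absorbed into the constant $C$ in \eqref{estimthm2-2'} and in every other place the lemma is invoked.
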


\noindent The proof of Lemma \ref{gronwall} is straightforward and therefore omitted. We apply Lemma 
\ref{gronwall} to \eqref{estimthm2-2} and derive the estimate:
\begin{multline}
\label{estimthm2-2'}
\sup_{n \in \N} \, \left( {\rm e}^{-2\, \gamma \, n \, \Delta t} \, \sum_{j \le J} \, \Delta x \, (u_j^n)^2 \right) 
+\sum_{n \ge 0} \, \Delta t \, {\rm e}^{-2\, \gamma \, n \, \Delta t} \, \sum_{\ell=1-r}^p \, (u_{J+\ell}^n)^2 \\
\le C \, \left\{ \sum_{j \le J} \, \Delta x \, (f_j)^2 
+\sum_{n \ge 0} \, \Delta t \, {\rm e}^{-2\, \gamma \, n \, \Delta t} \, \sum_{\ell=1}^p \, (g_{J+\ell}^n)^2 \right\} \, ,
\end{multline}
which is \eqref{estimation} for $k_b=0$.

We emphasize that when dealing with the case $k_b=0$, we have only used the stability condition \eqref{eq:stab} 
of Assumption \ref{as:scheme}, and we have never used \eqref{eq:consist} (not even for $m=0$). This is consistent 
with the result of \cite{goldberg-tadmor} which proves that the Dirichlet boundary condition satisfies the Uniform Kreiss 
Lopatinskii Condition independently of the nature of the boundary (inflow or outflow).
\bigskip

$\bullet$ \underline{The induction argument}. We now assume that the stability estimate \eqref{estimation} 
is valid up to some index $k_b \in \N$, and consider the following numerical scheme:
\begin{equation}
\label{pbauxk+1}
\begin{cases}
u_j^0=f_j \, ,& j\le J \, , \\
(D_-^{k_b+1} u^{n})_{J+\ell} =g_{J+\ell}^{n} \, ,& n \in \N \, ,\quad \ell=1,\dots,p \, ,\\
u_j^{n+1} ={\displaystyle \sum_{\ell=-r}^p} a_\ell \, u_{j+\ell}^n \, ,& n \in \N \, ,\quad j \le J \, .\\
\end{cases}
\end{equation}
We use the induction assumption by defining the following sequence:
$$
\forall \, n \in \N \, ,\quad \forall \, j \le J+p \, ,\quad w_j^n :=(D_- \, u^n)_j =u_j^n -u_{j-1}^n\, ,
$$
which satisfies:
\begin{equation*}
\begin{cases}
w_j^0=f_j-f_{j-1} \, ,& j\le J \, , \\
(D_-^{k_b} w^{n})_{J+\ell} =g_{J+\ell}^{n} \, ,& n \in \N \, ,\quad \ell=1,\dots,p \, ,\\
w_j^{n+1} ={\displaystyle \sum_{\ell=-r}^p} a_\ell \, w_{j+\ell}^n \, ,& n \in \N \, ,\quad j \le J \, .\\
\end{cases}
\end{equation*}
Applying the stability estimate \eqref{estimation} for the index $k_b$ and omitting one of the two nonnegative 
terms on the left hand side of \eqref{estimation}, we have already derived the preliminary estimate:
\begin{align}
\sum_{n\ge 0} \, \Delta t \, {\rm e}^{-2\, \gamma \, n\, \Delta t} \, 
\sum_{\ell=1-r-k_b}^p (u_{J+\ell}^n-u_{J+\ell-1}^n)^2 &\le C \left\{ \sum_{j \le J} \, \Delta x \, (f_j-f_{j-1})^2 
+\sum_{n\ge 0} \, \Delta t \, {\rm e}^{-2\, \gamma \, n\, \Delta t} \, \sum_{\ell=1}^p (g_{J+\ell}^n)^2 \right\} \notag \\
&\le C \left\{ \sum_{j \le J} \, \Delta x \, (f_j)^2 
+\sum_{n\ge 0} \, \Delta t \, {\rm e}^{-2\, \gamma \, n\, \Delta t} \, \sum_{\ell=1}^p (g_{J+\ell}^n)^2 \right\} 
\, .\label{estimthm2-3}
\end{align}

Let us now turn back to the numerical scheme \eqref{pbauxk+1} to which we are going to apply the 
so-called energy method. For a given time index $n \in \N$, we compute:
\begin{equation}
\label{energy1}
\sum_{j \le J} \, \Delta x \, (u_j^{n+1})^2 -\sum_{j \le J} \, \Delta x \, (u_j^n)^2 
=\sum_{j \le J} \Delta x \, \left\{ 2\, u_j^n \, \Big( \sum_{\ell=-r}^p a_\ell \, u_{j+\ell}^n -u_j^n \Big) 
+\Big( \sum_{\ell=-r}^p a_\ell \, u_{j+\ell}^n -u_j^n \Big)^2 \right\} \, .
\end{equation}
We use the following discrete integration by parts result, whose proof is postponed to Appendix \ref{appA}.

\begin{lemma}
\label{ippdiscrete}
Let $a>0$ and let Assumption \ref{as:scheme} be satisfied for some $k \in \N^*$. 
Then there exist a unique quadratic form 
${\mathcal Q}$ on $\R^{p+r}$ and some real coefficients $d_1,\dots,d_{p+r}$ such that, for any real 
numbers $v_{j-r},\dots,v_{j+p}$, there holds
\begin{align*}
2\, v_j \, \Big( \sum_{\ell=-r}^p a_\ell \, v_{j+\ell} -v_j \Big) &+\Big( \sum_{\ell=-r}^p a_\ell \, v_{j+\ell} -v_j \Big)^2 
= \sum_{\ell=1}^{p+r} \, d_\ell \, (v_{j+\ell-r} -v_{j-r})^2 \\
&\, +{\mathcal Q} \big( v_{j+2-r}-v_{j+1-r},\dots,v_j-v_{j-1},v_j,v_{j+1}-v_j,\dots,v_{j+p}-v_{j+p-1} \big) \\
&\, -{\mathcal Q} \big( v_{j+1-r}-v_{j-r},\dots,v_{j-1}-v_{j-2},v_{j-1},v_j-v_{j-1},\dots,v_{j+p-1}-v_{j+p-2} \big) \, .
\end{align*}
Furthermore, the quadratic form ${\mathcal Q}$ satisfies
$$
{\mathcal Q} (0,\dots,0,\underbrace{1}_{\text{\rm $r$-th entry}},0,\dots,0) 
= \sum_{\ell=-r}^p \ell \, a_\ell =-\lambda \, a <0 \, .
$$
\end{lemma}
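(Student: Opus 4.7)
The plan is to unpack the left-hand side as $S_j^2 - v_j^2$ with $S_j := \sum_{\ell=-r}^p a_\ell\, v_{j+\ell}$, and then use the conservativity relation $\sum_\ell a_\ell = 1$ (the $m=0$ case of \eqref{eq:consist}) to write
\[
R_j := S_j - v_j \;=\; \sum_{\ell=-r}^p a_\ell \, (v_{j+\ell} - v_j) \;=\; \sum_{k=j-r+1}^{j+p} \beta_k \, w_k,
\qquad w_k := v_k - v_{k-1},
\]
by Abel summation, with explicit coefficients $\beta_k$ given by tail sums of the $a_\ell$'s. Since $R_j$ no longer involves $v_j$ itself, the LHS splits as $2\, v_j\, R_j + R_j^2$: a bilinear part in $v_j$ and the first differences plus a purely quadratic form in the $w_k$'s.

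I would then seek $\mathcal{Q}$ in the form $\mathcal{Q}(X) = \alpha\, X_r^2 + 2\, X_r\, \Lambda(X_{(\widehat{r})}) + \mathcal{R}(X_{(\widehat{r})})$, where $X_r$ is the ``value'' slot, $X_{(\widehat{r})}$ collects the remaining entries, $\Lambda$ is linear and $\mathcal{R}$ is quadratic. After substituting $v_{j-1} = v_j - w_j$ in the backward argument, the coefficient of $v_j$ in $\mathcal{Q}^{\mathrm{fwd}} - \mathcal{Q}^{\mathrm{bwd}}$ is $2\, [\alpha\, w_j + \Lambda(y^{\mathrm{fwd}}) - \Lambda(y^{\mathrm{bwd}})]$, and matching this to $2\, R_j$ gives a triangular system for the coefficients of $\Lambda$ (as running partial sums of the $\beta_k$'s), together with the compatibility condition
\[
\alpha \;=\; \sum_{k=j-r+1}^{j+p} \beta_k \;=\; \sum_{\ell=-r}^p \ell\, a_\ell \;=\; -\lambda\, a,
\]
where the last equality is the $m=1$ consistency in \eqref{eq:consist}. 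Since $\mathcal{Q}(0,\dots,0,1,0,\dots,0)$ with the $1$ in the $r$-th slot equals $\alpha$, the normalization stated in the lemma follows automatically.

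After absorbing the $v_j$-linear contribution, what remains is a purely quadratic identity in the variables $(w_{j-r+1},\dots,w_{j+p})$:
\[
R_j^2 + \alpha\, w_j^2 - 2\, w_j\, \Lambda(y^{\mathrm{bwd}})
\;=\; \sum_{\ell=1}^{p+r} d_\ell\, (v_{j+\ell-r} - v_{j-r})^2 + \mathcal{R}(y^{\mathrm{fwd}}) - \mathcal{R}(y^{\mathrm{bwd}}).
\]
Both sides carry the same number $\binom{p+r+1}{2}$ of independent coefficients, and I would solve inductively for the $d_\ell$ and for the coefficients of $\mathcal{R}$: at each step, pick $d_\ell$ to kill a ``boundary'' monomial (say the one involving the leftmost $w_{j-r+1}$) and reduce to a strictly shorter identity. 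Uniqueness of $\mathcal{Q}$ follows because $\alpha$ and $\Lambda$ are forced by the $v_j$-linear matching, and then $\mathcal{R}$ is forced by the remaining identity modulo an additive constant that cancels in the telescoping difference.

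The main obstacle is the bookkeeping throughout the Abel summations and checking that the triangular system for $\Lambda$ is simultaneously consistent at its left endpoint $w_{j-r+1}$ and right endpoint $w_{j+p}$; the fact that this double constraint is satisfied exactly when $\alpha = \sum \ell\, a_\ell$ is the hidden reason why first-order consistency enters the statement, and it is the mechanism by which the transport velocity $-\lambda\, a$ appears in the normalization.
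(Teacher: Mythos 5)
Your argument is correct and is essentially the proof the paper gives in Appendix \ref{appA}: passing to the value-plus-differences variables and matching coefficients via a dimension count ($\binom{p+r+1}{2}$ on each side) and a triangular elimination starting from the boundary monomials is exactly the content of Lemma \ref{decompformequad} (there carried out in the original variables, with the change of variables performed afterwards), and conservativity and first-order consistency from \eqref{eq:consist} enter at the same two points. The only genuine variation is the normalization ${\mathcal Q}(0,\dots,0,1,0,\dots,0)=\sum_\ell \ell\, a_\ell=-\lambda\, a$, which you obtain as the solvability condition of the triangular system for the cross terms in $v_j$, whereas the paper derives it a posteriori by evaluating the identity on the sequence $v_j=j$ and identifying the term dominant in $j$; the two computations are equivalent.
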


We apply Lemma \ref{ippdiscrete} to the right hand side in \eqref{energy1}. The sum with respect to $j \le J$ 
makes the ${\mathcal Q}$ terms a telescopic sum, and we are left with the energy balance:
\begin{multline}
\label{energy2}
\sum_{j \le J} \, \Delta x \, (u_j^{n+1})^2 -\sum_{j \le J} \, \Delta x \, (u_j^n)^2 
=\sum_{j \le J} \, \sum_{\ell=1}^{p+r} \, d_\ell \, \Delta x \, (u_{j+\ell-r}^n -u_{j-r}^n)^2 \\
+\Delta x \, {\mathcal Q} 
(u_{J+2-r}^n-u_{J+1-r}^n,\dots,u_J^n-u_{J-1}^n,\underbrace{u_J^n}_{\text{\rm $r$-th entry}},u_{J+1}^n-u_J^n,\dots, 
u_{J+p}^n-u_{J+p-1}^n) \, .
\end{multline}
Let us start with the ``boundary term'' on the right hand side of \eqref{energy2}. Thanks to the property of 
${\mathcal Q}$ given in Lemma \ref{ippdiscrete}, we know that the coefficient of $(u_J^n)^2$ in the expression
$$
{\mathcal Q} (u_{J+2-r}^n-u_{J+1-r}^n,\dots,u_J^n-u_{J-1}^n,u_J^n,u_{J+1}^n-u_J^n,\dots,u_{J+p}^n-u_{J+p-1}^n) 
$$
is $-\lambda \, a<0$. Hence, by repeatedly applying the Young inequality for the cross terms in the quadratic 
form ${\mathcal Q}$, we get, for a suitable constant $C>0$ (recall the relation $\lambda \, \Delta x =\Delta t$):
\begin{multline}
\label{energy3}
\sum_{j \le J} \, \Delta x \, (u_j^{n+1})^2 -\sum_{j \le J} \, \Delta x \, (u_j^n)^2 +\dfrac{a}{2} \, \Delta t \, (u_J^n)^2 \\
\le \sum_{\ell=1}^{p+r} \, d_\ell \, \sum_{j \le J} \, \Delta x \, (u_{j+\ell-r}^n -u_{j-r}^n)^2 
+C \, \Delta t \, \sum_{\ell=2-r}^p (u_{J+\ell}^n-u_{J+\ell-1}^n)^2 \, .
\end{multline}
It remains to estimate the ``bulk'' term on the right hand side of \eqref{energy3}, which encodes the 
``$\ell^2$-dissipation'' of the discretization \eqref{transportnum} of the transport equation. More precisely, if 
we start from a sequence $v \in \ell^2(\Z)$ and consider its image by the contraction \eqref{contraction}, we 
can rewrite Equation \eqref{energy1} for $v$ and use the decomposition given in 
Lemma \ref{ippdiscrete} to derive the inequality:
\begin{equation}
\label{energy4}
\forall \, v \in \ell^2(\Z) \, ,\quad 
\sum_{j \in \Z} \Delta x \left( \sum_{\ell=-r}^p a_\ell \, v_{j+\ell} \right)^2 - \sum_{j \in \Z} \Delta x \left( v_j \right)^2 
=\sum_{\ell=1}^{p+r} \, d_\ell \, \sum_{j \in \Z} \, \Delta x \, (v_{j+\ell-r} -v_{j-r})^2 \le 0 \, .
\end{equation}
Let us now consider the sequence $(u_j^n)_{j \le J+p} \in \ell^2$ for the given integer $n \in \N$. We extend 
the latter sequence as an $\ell^2$ sequence on the whole set of integers $\Z$ by symmetry with respect to 
the index $J+p$:
$$
\forall \, j \in \Z \, ,\quad v_j := \begin{cases}
u_j^n \, ,& \text{\rm if } j \le J+p \, ,\\
u_{2\, (J+p)-j}^n \, ,& \text{\rm if } j \ge J+p \, .
\end{cases}
$$
We then compute
\begin{multline}
\label{energy5}
\underbrace{\sum_{\ell=1}^{p+r} \, d_\ell \, \sum_{j \in \Z} \, \Delta x \, (v_{j+\ell-r} -v_{j-r})^2}_{\le \, 0 \, 
\text{\rm by \eqref{energy4}}} \\
=2 \sum_{\ell=1}^{p+r} \, d_\ell \, \sum_{j \le J} \, \Delta x \, (u_{j+\ell-r}^n -u_{j-r}^n)^2 
+\sum_{\ell=1}^{p+r} \, d_\ell \, \Delta x \, \sum_{j=J+1}^{J+2\, (p+r)-\ell-1} \, (v_{j+\ell-r} -v_{j-r})^2.
\end{multline}
For $\ell=1,\dots,p+r$ and $j=J+1,\dots,J+2\, (p+r)-\ell-1$, $v_{j+\ell-r}$ and $v_{j-r}$ on the right hand side 
of \eqref{energy5} are taken among the real numbers $u_{J+1-r}^n,\dots,u_{J+p}^n$. We therefore obtain 
the upper bound:
$$
\left| \sum_{\ell=1}^{p+r} \, d_\ell \, \Delta x \, \sum_{j=J+1}^{J+2\, (p+r)-\ell-1} \, (v_{j+\ell-r} -v_{j-r})^2 \right| 
\le C \, \Delta t \, \sum_{\ell=2-r}^p \, (u_{J+\ell}^n-u_{J+\ell-1}^n)^2 \, .
$$
Using the latter bound in \eqref{energy5}, we get the estimate
$$
\sum_{\ell=1}^{p+r} \, d_\ell \, \sum_{j \le J} \, \Delta x \, (u_{j+\ell-r}^n -u_{j-r}^n)^2 
\le C \, \Delta t \, \sum_{\ell=2-r}^p \, (u_{J+\ell}^n-u_{J+\ell-1}^n)^2 \, ,
$$
which simplifies \eqref{energy3} into:
$$
\sum_{j \le J} \, \Delta x \, (u_j^{n+1})^2 -\sum_{j \le J} \, \Delta x \, (u_j^n)^2 
+\dfrac{a}{2} \, \Delta t \, (u_J^n)^2 \le C \, \Delta t \, \sum_{\ell=2-r}^p \, (u_{J+\ell}^n-u_{J+\ell-1}^n)^2 \, .
$$

We now apply the summation argument of Lemma \ref{gronwall} to the latter inequality and derive the estimate:
\begin{multline*}
\sup_{n \in \N} \, \left( {\rm e}^{-2\, \gamma \, n\, \Delta t} \, \sum_{j \le J} \, \Delta x \, (u_j^n)^2 \right) 
+\sum_{n\ge 0} \, \Delta t \, {\rm e}^{-2\, \gamma \, n\, \Delta t} \, (u_J^n)^2 \\
\le C \, \left\{ \sum_{j \le J} \, \Delta x \, (f_j)^2 
+\sum_{n\ge 0} \, \Delta t \, {\rm e}^{-2\, \gamma \, n\, \Delta t} \, 
\sum_{\ell=2-r}^p \, (u_{J+\ell}^n-u_{J+\ell-1}^n)^2 \right\} \, .
\end{multline*}
We then combine the latter inequality with the preliminary estimate \eqref{estimthm2-3}, which yields:
\begin{multline}
\label{estimthm2-4}
\sup_{n \in \N} \, \left( {\rm e}^{-2\, \gamma \, n\, \Delta t} \, \sum_{j \le J} \, \Delta x \, (u_j^n)^2 \right) 
+\sum_{n\ge 0} \, \Delta t \, {\rm e}^{-2\, \gamma \, n\, \Delta t} \, (u_J^n)^2 \\
\le C \left\{ \sum_{j \le J} \, \Delta x \, (f_j)^2 +\sum_{n\ge 0} \, \Delta t \, 
{\rm e}^{-2\, \gamma \, n\, \Delta t} \, \sum_{\ell=1}^p \, (g_{J+\ell}^n)^2 \right\} \, .
\end{multline}

At this stage, we have almost proved that \eqref{estimation} holds up to the index $k_b+1$. Indeed, if 
we combine the trace estimates provided by \eqref{estimthm2-3} and \eqref{estimthm2-4}, we get a full 
control of the trace of $(u^n)$, that is of $(u_{J+\ell}^n)_{n \ge 0}$ at $\ell=-r-k_b,\dots,p$:
\begin{equation}
\label{estimthm2-5}
\sum_{n\ge 0} \, \Delta t \, {\rm e}^{-2\, \gamma \, n\, \Delta t} \, \sum_{\ell=-r-k_b}^p (u_{J+\ell}^n)^2 
\le C \left\{ \sum_{j \le J+p} \, \Delta x \, (f_j)^2 +\sum_{n\ge 1} \, \Delta t \, 
{\rm e}^{-2\, \gamma \, n\, \Delta t} \, \sum_{\ell=1}^p \, (g_{J+\ell}^n)^2 \right\} \, .
\end{equation}
Combining with \eqref{estimthm2-4}, we have completed the proof of \eqref{estimation} for the index $k_b+1$, 
which also completes the proof of Theorem \ref{thm2}.
\end{proof}

\subsection{Convergence estimates for Neumann outflow numerical boundary conditions}

In the previous paragraph, we have proved the stability estimate \eqref{estimation} in order to highlight 
the fact that our method automatically yields the verification of the Uniform Kreiss Lopatinskii Condition. 
However, the exponential weights arising in \eqref{estimation} and the fact that no ``interior'' source term 
is considered in \eqref{pbauxk} make this estimate hardly applicable as such in view of the convergence 
analysis below. We therefore state a slightly weakened but more practical version of Theorem \ref{thm2} 
which will help us proving Theorem \ref{thm3} below.

\begin{proposition}
\label{prop1}
Let $a>0$, let $k\ge 1$ and $k_b \in \N$, let $\lambda > 0$. Then there exists a constant $C>0$ such that for 
all initial data $(f_j)_{j \le J} \in \ell^2$, for any $N \in \N^*$, for all boundary source terms $(g_{J+1}^n)_{0 \le n \le N-1}, 
\dots, (g_{J+p}^n)_{0 \le n \le N-1}$, and all 
interior source terms $(F_j^n)_{j \le J,1\le n \le N}$ with $(F_j^n)_{j \le J}  \in \ell^2$ for all $n=1,\dots,N$,
the solution $(u_j^n)_{j \le J,0 \le n \le N}$ to the scheme:
\begin{equation}
\label{pbauxk'}
\begin{cases}
u_j^0=f_j \, ,& j\le J \, , \\
(D_-^{k_b} u^{n})_{J+\ell} =g_{J+\ell}^{n} \, ,& n=0,\dots,N-1 \, ,\quad \ell=1,\dots,p \, ,\\
u_j^{n+1} ={\displaystyle \sum_{\ell=-r}^p} a_\ell \, u_{j+\ell}^n +\Delta t \, F_j^{n+1} \, ,& 
n=0,\dots,N-1 \, ,\quad j \le J \, ,
\end{cases}
\end{equation}
where the coefficients $a_\ell$ satisfy Assumption \ref{as:scheme} with integer $k$, satisfies the estimate:
\begin{equation}
\label{estimation'}
\sup_{n=0,\dots,N} \, \sum_{j \le J} \, \Delta x \, (u_j^n)^2 \le C \left\{ \sum_{j \le J} \, \Delta x \, (f_j)^2 
+(N \, \Delta t)^2 \, \sup_{n=1,\dots,N} \,  \sum_{j \le J} \, \Delta x \, (F_j^n)^2 
+\sum_{n=0}^{N-1} \, \Delta t \, \sum_{\ell=1}^p (g_{J+\ell}^n)^2 \right\} \, 
\end{equation}
for any $\Delta t$ and $\Delta x$ such that $\Delta t/\Delta x = \lambda$ and $\Delta t \in (0, 1]$.
\end{proposition}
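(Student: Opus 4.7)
The plan is to reduce Proposition \ref{prop1} to Theorem \ref{thm2} in two steps: first, split off the interior source via a discrete Duhamel formula; second, remove the exponential weight present in \eqref{estimation}, using crucially that the constant $C$ in Theorem \ref{thm2} is uniform in $\gamma>0$ and that only finitely many time levels ($n=0,\dots,N$) contribute.

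First I would decompose $u^n = v^n + w^n$, where $(v_j^n)$ solves \eqref{pbauxk'} with the prescribed data $(f_j)$ and $(g_{J+\ell}^n)$ but with $F \equiv 0$, while $(w_j^n)$ solves \eqref{pbauxk'} with zero initial data, zero boundary source, and the given interior source $\Delta t\, F^{n+1}$. For $v^n$, I would extend $(g_{J+\ell}^n)$ by $0$ for $n \ge N$ so that the growth hypothesis of Theorem \ref{thm2} holds trivially. Applying \eqref{estimation} and using $e^{-2\gamma n\Delta t}\ge e^{-2\gamma N\Delta t}$ for $0 \le n \le N$ gives
\[
\sup_{0 \le n \le N} \sum_{j \le J} \Delta x\, (v_j^n)^2
\le C\, e^{2\gamma N\Delta t} \left\{ \sum_{j\le J} \Delta x\, (f_j)^2
+ \sum_{n=0}^{N-1} \Delta t \sum_{\ell=1}^p (g_{J+\ell}^n)^2 \right\},
\]
and since $C$ is independent of $\gamma>0$, letting $\gamma \downarrow 0$ eliminates the exponential factor.

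Next I would tackle $w^n$ by discrete Duhamel. For each $m=1,\dots,N$, let $(\tilde{w}^{(m),n}_j)_{n\ge m}$ be the solution of the time-translated homogeneous problem with initial data $\tilde{w}^{(m),m}_j := \Delta t\, F_j^m$, vanishing boundary source, and no interior source; by linearity and induction on $n$, $w^n = \sum_{m=1}^{n} \tilde{w}^{(m),n}$. By time translation invariance of the scheme, the estimate just derived for $v^n$ applied to $\tilde{w}^{(m)}$ yields
\[
\Big( \sum_{j\le J} \Delta x\, (\tilde{w}^{(m),n}_j)^2 \Big)^{1/2}
\le \sqrt{C}\, \Delta t \, \Big( \sum_{j\le J} \Delta x\, (F^m_j)^2 \Big)^{1/2},
\quad n \ge m.
\]
The triangle inequality in $\ell^2_j$ and the crude bound $\sum_{m=1}^n 1 \le N$ then give
\[
\Big( \sum_{j\le J} \Delta x\, (w_j^n)^2 \Big)^{1/2}
\le \sqrt{C}\, N\, \Delta t \, \sup_{1 \le m \le N} \Big( \sum_{j\le J} \Delta x\, (F^m_j)^2 \Big)^{1/2}.
\]

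Finally I would combine the estimates for $v^n$ and $w^n$ via $(a+b)^2 \le 2a^2 + 2b^2$ and take the supremum over $0 \le n \le N$ to obtain \eqref{estimation'}. No step poses a serious obstacle: the removal of the exponential weight is the only subtle point, and it is legitimate precisely because $C$ in Theorem \ref{thm2} is uniform in $\gamma$ and the boundary source term has been truncated to a finite time horizon; the rest is Duhamel plus Minkowski.
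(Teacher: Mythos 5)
Your proposal is correct and follows essentially the same route as the paper: a linear splitting into the case $F\equiv 0$ (handled by extending the boundary data by zero and letting $\gamma\downarrow 0$ in \eqref{estimation}) and the case $f\equiv 0$, $g\equiv 0$ (handled by the discrete Duhamel formula together with the power-boundedness of the homogeneous solution operator furnished by Theorem \ref{thm2}). Your explicit superposition $w^n=\sum_{m=1}^n\tilde w^{(m),n}$ is just the Duhamel sum $\sum_{m=1}^n \Delta t\,{\mathcal S}^{n-m}F^m$ written out, so there is nothing genuinely different to compare.
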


\begin{proof}
By linearity of \eqref{pbauxk'}, it is sufficient to examine separately the cases $F \equiv 0$ (no interior source term), 
and $f \equiv 0$, $g \equiv 0$ (interior forcing only). In the case $F \equiv 0$, the estimate \eqref{estimation'} is directly 
obtained by:
\begin{itemize}
   \item first extending the boundary source terms $(g_{J+\ell}^n)_{\ell=1,\dots,p}$ by $0$ for $n>N-1$, which does not affect 
   the solution to \eqref{pbauxk'} for $j \le J$ at time steps earlier than $N$,
   \item then passing to the limit $\gamma \to 0$ in \eqref{estimation} (and forgetting about the nonnegative trace estimate on 
   the left hand side of \eqref{estimation}).
\end{itemize}

\noindent It therefore remains to examine the case $f \equiv 0$, $g \equiv 0$, which is done by using the Duhamel formula. 
Namely, Theorem \ref{thm2} shows that the solution $(v_j^n)_{j \le J+p, n \in \N}$ to the numerical scheme with homogeneous 
boundary conditions:
\begin{equation*}
\begin{cases}
v_j^0=f_j \, ,& j\le J \, , \\
(D_-^{k_b} v^{n})_{J+\ell} =0 \, ,& n \in \N \, ,\quad \ell=1,\dots,p \, ,\\
v_j^{n+1} ={\displaystyle \sum_{\ell=-r}^p} a_\ell \, v_{j+\ell}^n \, ,& n \in \N \, ,\quad j \le J \, ,
\end{cases}
\end{equation*}
satisfies the uniform in time bound 
$$
\sup_{n \in \N} \, \sum_{j \le J} \, \Delta x \, (v_j^n)^2 \le C \, \sum_{j \le J} \, \Delta x \, (f_j)^2 \, .
$$
Writing $v^n$ under the form ${\mathcal S}^n \, f$, this means that the operator ${\mathcal S}$, which is the generator of the 
discrete semigroup $({\mathcal S}^n)_{n \in \N}$, is power bounded on $\ell^2(-\infty,J)$. At this stage, it remains to observe 
that the solution to \eqref{pbauxk'} in the case $f \equiv 0$, $g \equiv 0$, can be written with the Duhamel formula:
$$
\forall \, n =0,\dots,N \, ,\quad u^n =\sum_{m=1}^n \Delta t \, {\mathcal S}^{n-m} \, F^m \, .
$$
Since ${\mathcal S}$ is power bounded on $\ell^2$, we end up with:
$$
\sup_{n=0,\dots,N} \, \left( \sum_{j \le J} \, \Delta x \, (u_j^n)^2 \right)^{1/2} 
\le C \, \sum_{n=1}^N \Delta t \, \left( \sum_{j \le J} \, \Delta x \, (F_j^n)^2 \right)^{1/2} 
\le C \, N \, \Delta t \, \sup_{n=1,\dots,N} \,  \left( \sum_{j \le J} \, \Delta x \, (F_j^n)^2 \right)^{1/2} \, .
$$
This completes the proof of \eqref{estimation'}.
\end{proof}

\noindent We are now ready to prove our convergence result for the Neumann boundary condition at an outflow boundary.

\begin{theorem}
\label{thm3}
Let $a>0$, let $k \in \N^*$ and $k_b \in \N$, let $\lambda > 0$. Then there exists $C>0$ such that for any $T > 0$ and 
$J \in \N^*$, for any $L \geq 1$, and for any $u_0 \in H^{k+1}((-\infty,L))$, the solution $(u_j^n)_{j\le J,n \in \N}$ to
\begin{equation}
\label{schemademidroite}
\begin{cases}
u_j^0=u_j^{\rm in} :=\dfrac{1}{\Delta x} \, \int_{x_{j-1}}^{x_j} u_0(y) \, {\rm d}y \, ,& j\le J \, , \\
(D_-^{k_b} u^{n})_{J+\ell} =0 \, ,& 0 \le n \leq T / \Delta t - 1 \, ,\quad \ell=1,\dots,p \, ,\\
u_j^{n+1} ={\displaystyle \sum_{\ell=-r}^p} a_\ell \, u_{j+\ell}^n \, ,& 0 \le n \leq T/\Delta t -1 \, ,\quad j \le J \, ,
\end{cases}
\end{equation}
where the coefficients $a_\ell$ satisfy Assumption \ref{as:scheme} with integer $k$, satisfies, with $k_0 := \min(k,k_b)$:
\begin{equation}
\label{estiml2}
\sup_{0 \le n \le T/\Delta t} \, \left( \sum_{j \le J} \, \Delta x \, \left( 
u_j^n -\dfrac{1}{\Delta x} \, \int_{x_{j-1}}^{x_j} u_0(y-a\, t^n) \, {\rm d}y \right)^2 \right)^{1/2} 
\le C \, \big( \sqrt{T}+T \big) \, \Delta x^{k_0} \, \| u_0 \|_{H^{k_0+1}((-\infty,L))} \, ,
\end{equation}
where $\Delta x = L/J$ and $\Delta t = \lambda \, \Delta x$ (assumed to be less than $1$). 
In particular, the following global $\ell^\infty$ convergence estimate is satisfied:
\begin{equation}
\label{estimlinfty1}
\sup_{0 \le n \le T/\Delta t} \, \sup_{j \le J} \, \left| u_j^n -\dfrac{1}{\Delta x} \, 
\int_{x_{j-1}}^{x_j} u_0(y-a\, t^n) \, {\rm d}y 
\right| \le C \, \big( \sqrt{T}+T \big) \, \Delta x^{k_0 -1/2} \, \| u_0 \|_{H^{k_0+1}((-\infty,L))} \, .
\end{equation}
\end{theorem}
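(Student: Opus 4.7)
The plan is the classical consistency-plus-stability strategy with stability provided by Proposition \ref{prop1}. First, extend $u_0$ to all of $\R$ by a bounded Sobolev extension operator, so that $v(t,x) := u_0(x-at)$ is defined also on the ghost cells and $\|u_0\|_{H^{k_0+1}(\R)} \le C\, \|u_0\|_{H^{k_0+1}((-\infty,L))}$. Introduce the cell averages $\bar v_j^n := \Delta x^{-1} \int_{x_{j-1}}^{x_j} v(t^n,y)\, {\rm d}y$ and the error $e_j^n := u_j^n - \bar v_j^n$. A direct computation shows that $e$ solves a perturbed version of \eqref{pbauxk'} with $e_j^0 = 0$, boundary source $G_{J+\ell}^n := -(D_-^{k_b} \bar v^n)_{J+\ell}$, and interior source defined by $\Delta t\, F_j^{n+1} := \sum_\ell a_\ell\, \bar v_{j+\ell}^n - \bar v_j^{n+1}$. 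Applying estimate \eqref{estimation'} with $N := \lfloor T/\Delta t \rfloor$ reduces Theorem \ref{thm3} to controlling $F$ in the $\ell^2_\Delta$ norm and $G$ pointwise.

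For the interior truncation, I invoke the moment conditions \eqref{eq:consist} (which hold up to order $k_0$ since $k_0 \le k$). Setting $w(y) := u_0(y - a\, t^n)$ and using translation invariance of the integral,
\[
\Delta t\, F_j^{n+1} = \Delta x^{-1} \int_{x_{j-1}}^{x_j} \Big[ \textstyle \sum_\ell a_\ell\, w(y + \ell\, \Delta x) - w(y - \lambda\, a\, \Delta x) \Big]\, {\rm d}y.
\]
Expanding $w(y + \ell\, \Delta x)$ and $w(y - \lambda\, a\, \Delta x)$ via the integral form of the Taylor remainder at order $k_0$ about $y$, and invoking $\sum_\ell \ell^m\, a_\ell = (-\lambda\, a)^m$ for $m = 0,\dots,k_0$, cancels all polynomial terms and leaves a remainder of the form $\Delta x^{k_0+1}$ times a bounded kernel convolved with $w^{(k_0+1)}$. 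Minkowski's inequality in $L^2_y(\R)$ followed by Cauchy-Schwarz on the cell average then yields $\big( \sum_{j \le J} \Delta x\, (F_j^{n+1})^2 \big)^{1/2} \le C\, \Delta x^{k_0}\, \|u_0\|_{H^{k_0+1}(\R)}$ uniformly in $n$.

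For the boundary truncation, I factor $D_-^{k_b} = D_-^{k_b - k_0} \circ D_-^{k_0}$ (using $k_b \ge k_0$). Since $D_-$ has norm at most $2$ on $\ell^\infty$, $|G_{J+\ell}^n| \le 2^{k_b - k_0}\, \max_{\ell'} \big| (D_-^{k_0} \bar v^n)_{J+\ell'} \big|$ for indices $\ell'$ in a bounded range. A standard Taylor expansion of the cell averages of a smooth function gives $\big| (D_-^{k_0} \bar v^n)_{J+\ell'} \big| \le C\, \Delta x^{k_0}\, \|v^{(k_0)}(t^n,\cdot)\|_{L^\infty(I)}$ for some bounded interval $I$ near $x_{J+\ell'}$, and the one-dimensional Sobolev embedding $H^1(\R) \hookrightarrow L^\infty(\R)$ supplies $\|v^{(k_0)}(t^n,\cdot)\|_{L^\infty} = \|u_0^{(k_0)}\|_{L^\infty(\R)} \le C\, \|u_0\|_{H^{k_0+1}(\R)}$. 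Therefore $\sum_{\ell=1}^p (G_{J+\ell}^n)^2 \le C\, \Delta x^{2 k_0}\, \|u_0\|^2_{H^{k_0+1}}$ uniformly in $n$.

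Inserting these two bounds into \eqref{estimation'} yields
\[
\sup_{0 \le n \le N} \sum_{j \le J} \Delta x\, (e_j^n)^2 \le C\, (T^2 + T)\, \Delta x^{2 k_0}\, \|u_0\|^2_{H^{k_0+1}},
\]
and taking square roots (using $T^2 + T \le (T + \sqrt T)^2$) gives \eqref{estiml2}. The $\ell^\infty$ estimate \eqref{estimlinfty1} then follows from the elementary inverse inequality $|e_j^n| \le \Delta x^{-1/2}\, \big( \sum_{j' \le J} \Delta x\, (e_{j'}^n)^2 \big)^{1/2}$, which is responsible for the loss of $1/2$ in the exponent. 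The main technical point is the interior truncation estimate: obtaining an $\ell^2_\Delta$ (rather than $\ell^\infty_\Delta$) bound for $F$ from $H^{k_0+1}$ regularity alone; this is handled cleanly by keeping the Taylor remainders in integral form and applying Minkowski's inequality in $L^2_y$, so that $L^2$ control of the top derivative is directly converted into $\ell^2_\Delta$ control of the truncation error.
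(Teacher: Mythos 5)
Your proof is correct and follows essentially the same route as the paper: form the error equation with zero initial data, interior and boundary truncation sources, and close via the stability estimate of Proposition \ref{prop1}, with the final $\ell^\infty$ bound coming from the crude $\Delta x^{-1/2}$ inverse inequality. The only (harmless) technical differences are that the paper estimates the interior consistency error by Plancherel and a symbol bound where you use Taylor remainders in integral form plus Minkowski's inequality, and that you treat the case $k_b > k$ explicitly by factoring $D_-^{k_b} = D_-^{k_b-k_0}\circ D_-^{k_0}$, whereas the paper reduces without loss of generality to $k_b=k$ and leaves that case to the reader.
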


\begin{proof}
To pass from \eqref{estiml2} to \eqref{estimlinfty1} is very crude, and simply relies on the inequality:
$$
\forall \, j \le J \, ,\quad |b_j| \le \Delta x^{-1/2} \, \left( \sum_{j \le J} \, \Delta x \, b_j^2 \right)^{1/2} 
$$
for any real sequence $(b_j)_{j \leq J}$. We therefore focus on the derivation of the $\ell^\infty_n (\ell^2_j)$ estimate 
\eqref{estiml2} which unsurprisingly relies on the stability estimate \eqref{estimation'} provided by Proposition \ref{prop1}. 
In the proof below, we assume $k_b \le k$, so that the limiting order of convergence arises from the numerical boundary 
conditions in \eqref{schemademidroite} and not from the discretization of the transport equation. The proof in the case 
$k_b>k$ is quite similar and we leave the corresponding modifications to the interested reader. Since the validity of 
Assumption \ref{as:scheme} for some integer $k \ge 1$ implies the validity of the relations \eqref{eq:consist} for the 
restricted subset of indices $m=0,\dots,k_b$, we can even assume without loss of generality $k_b=k$.

We now denote by $U_0$ the extension of $u_0$ as a function in $H^{k+1}(\R)$ by the linear continuous reflexion 
operator of \cite{dautraylions}, and define, for any $j \in \Z$,
\begin{equation}
\label{defwnj}
w_j^n := \dfrac{1}{\Delta x} \, \int_{x_{j-1}}^{x_j} U_0(y-a\, t^n) \, {\rm d}y \, ,
\end{equation}
to be the cell average of the exact solution $((t,x) \longmapsto U_0(x-a\, t))$ to the transport equation over $\R$.

For the rest of this proof, we will denote the integer part of $T/\Delta t$ by $N_T$. Then for all $n = 0, \dots, N_T$ and 
$j \le J+p$, we define the error $\eps_j^n :=u_j^n-w_j^n$. By definition of the sequences $(u_j^n)$ and $(w_j^n)$, the 
error $(\eps_j^n)_{j \le J,n=0,\dots,N_T}$ is a solution to:
\begin{equation}
\label{erreurdemidroite}
\begin{cases}
\eps_j^0=\eps_j^{\rm in} \, ,& j\le J \, , \\
(D_-^{k_b} \eps^{n})_{J+\ell} =-(D_-^{k_b} w^{n})_{J+\ell} \, ,& 0 \le n \le N_T-1 \, ,\quad \ell=1,\dots,p \, ,\\
\eps_j^{n+1} ={\displaystyle \sum_{\ell=-r}^p} a_\ell \, \eps_{j+\ell}^n +\Delta t \, e_j^{n+1} \, ,& 
0 \le n \le N_T -1 \, ,\quad j \le J \, ,
\end{cases}
\end{equation}
where the initial condition $(\eps_j^{\rm in})_{j\le J}$ in \eqref{erreurdemidroite} satisfies $\eps_j^{\rm in} = 0$ for any $j \le J$, 
and the interior consistency error $(e_j^n)_{j \le J,n=1,\dots,N_T}$ in \eqref{erreurdemidroite} is given by
\begin{equation}
\label{consistencyerror}
e_j^n :=-\dfrac{1}{\Delta t} \, \left( w_j^n -\sum_{\ell=-r}^p a_\ell \, w_{j+\ell}^{n-1} \right) 
\end{equation}
for any $1 \le n \le N_T$ and any $j \le J$. To prove the convergence estimate \eqref{estimlinfty1}, the first task is to evaluate 
the size of the consistency errors in \eqref{erreurdemidroite}. The boundary errors in \eqref{erreurdemidroite} are dealt with by 
the following elementary result.

\begin{lemma}
\label{lem1}
Let $m,p \in \N$. There exists a constant $C > 0$ such that for any $\Delta x > 0$ and for any $v \in H^{m+1}((-\infty,L+p \, \Delta x))$, 
defining 
$$
v_j :=\dfrac{1}{\Delta x} \, \int_{x_{j-1}}^{x_j} v(y) \, {\rm d}y \, , \quad j \le J+p \, , 
$$
one has 
$$
\big| (D_-^m v)_{J+\ell} \big| \le C \, \Delta x^m \, \| v^{(m)} \|_{H^1((-\infty,L+p \, \Delta x))} \, ,\quad \ell =1,\dots,p \, .
$$
\end{lemma}

\begin{proof}[Proof of Lemma \ref{lem1}]
For $\ell = 1,\dots,p$, there holds
\begin{align*}
(D_-^m v)_{J+\ell} &= \sum_{m'=0}^m \binom{m}{m'} \, (-1)^{m-m'} \, \dfrac{1}{\Delta x} \, 
\int_{x_{J-m-1+\ell}}^{x_{J-m+\ell}} v(y+m'\, \Delta x) \, {\rm d}y \\
&= \sum_{m'=0}^m \binom{m}{m'} \, \dfrac{(-1)^{m-m'}}{(m-1)!} \, \dfrac{1}{\Delta x} \, 
\int_{x_{J-m-1+\ell}}^{x_{J-m+\ell}} \int_y^{y+m'\, \Delta x} v^{(m)}(z) \, (y+m'\, \Delta x-z)^{m-1} \, {\rm d}z \, {\rm d}y \, ,
\end{align*}
where we have used Taylor's formula and cancellation properties of the binomial coefficients. Applying the Cauchy-Schwarz 
inequality to each double integral in the latter expression, we get
\begin{align*}
|(D_-^m v)_{J+\ell}| &\le C \, \sum_{m'=0}^m  \left( \int_{x_{J-m-1+\ell}}^{x_{J-m+\ell}} \int_y^{y+m'\, \Delta x} v^{(m)}(z)^2 \, 
(y+m'\, \Delta x-z)^{2\, m-2} \, {\rm d}z \, {\rm d}y \right)^{1/2} \\
&\le C \, \Delta x^{m-1/2} \, \left( \int_{x_{J-m-1+\ell}}^{x_{J+\ell}} v^{(m)}(z)^2 \, {\rm d}z \right)^{1/2} 
\le C \, \Delta x^m \, \| v^{(m)} \|_{L^\infty (-\infty,L + p\Delta x)} \, .
\end{align*}
The proof follows by using the imbedding of $H^1$ in $L^\infty$ in one space dimension.
\end{proof}

We now apply Lemma \ref{lem1} with $m = k_b$ to evaluate the boundary errors in \eqref{erreurdemidroite}. We get:
\begin{align}
\sum_{n=0}^{N_T-1} \Delta t \, \sum_{\ell=1}^p \big( (D_-^{k_b} \varepsilon^n)_{J+\ell} \big)^2 \, & = 
\sum_{n=0}^{N_T-1} \Delta t \, \sum_{\ell=1}^p \big( (D_-^{k_b} w^n)_{J+\ell} \big)^2 \notag \\
&\le \, C \, N_T \, \Delta t \, \Delta x^{2\, k_b} \, \| U_0(\cdot - a t^n) \|_{H^{k_b+1}((-\infty,L + p\Delta x))}^2  \notag \\
& \le C \, T \, \Delta x^{2\, k_b} \, \| u_0 \|_{H^{k_b+1}((-\infty,L))}^2 \label{estimerreur2}
\end{align}
thanks to the continuity of the reflexion operator. Using \eqref{estimerreur2} in the stability estimate \eqref{estimation'} provided by 
Proposition \ref{prop1}, we already get
\begin{equation}
\label{estimerreur3}
\sup_{n=0,\dots,N_T} \, \sum_{j \le J} \, \Delta x \, (\eps_j^n)^2 
\le C \left\{ T^2 \, \sup_{n=1,\dots,N_T} \,  \sum_{j \le J} \, \Delta x \, (e_j^n)^2  + \Delta x^{2\, k_b} \, T \, 
\| u_0 \|_{H^{k_b+1}((-\infty,L))}^2 \right\} \, ,
\end{equation}
so the remaining task is to evaluate the size of the interior consistency error $(e_j^n)_{j \le J}$.

By its definition \eqref{defwnj}, the value of $w_j^n$ in each interior cell is the average of the extension of the exact solution, 
that is the cell average of $U_0(\cdot - a \, t^n)$. We thus have, for any $j \leq J$ and $n \ge 1$:
\begin{align*}
e_j^n &= \dfrac{1}{\Delta t} \, \left(  \dfrac{1}{\Delta x} \, \int_{x_{j-1}}^{x_j} U_0(y -a\, \Delta t-a\, t^{n-1}) \, {\rm d}y 
-\sum_{\ell=-r}^p \dfrac{1}{\Delta x} \, \int_{x_{j-1}}^{x_j} a_\ell \, U_0(y+\ell \, \Delta x - a\, t^{n-1}) \, {\rm d}y \right) \\
&= \dfrac{1}{\Delta t \, \Delta x} \, \int_{x_{j-1}}^{x_j} \left( U_0 (y-a\, \Delta t -a\, t^{n-1}) -\sum_{\ell=-r}^p 
a_\ell \, U_0 (y+\ell \, \Delta x -a\, t^{n-1}) \right) {\rm d}y \, ,\\
&=  \dfrac{1}{\Delta t \, \Delta x} \, \int_{x_{j-1}}^{x_j} \left( U^{n-1} (y-a\, \Delta t) -\sum_{\ell=-r}^p 
a_\ell \, U^{n-1} (y+\ell \, \Delta x ) \right) {\rm d}y 
\end{align*}
where we defined $U^{n-1} \in H^{k+1}(\R)$ as $U_0(\cdot - a\, t^{n-1})$. We also define the consistency error on the whole domain: 
$$
\forall \, j \in \Z \, ,\quad E_j^n :=\dfrac{1}{\Delta t \, \Delta x} \, \int_{x_{j-1}}^{x_j} \left( 
U^{n-1} (y-a\, \Delta t) -\sum_{\ell=-r}^p a_\ell \, U^{n-1} (y+\ell \, \Delta x) \right) {\rm d}y \, ,
$$
so that we have $e_j^n=E_j^n$ for all $j \le J$. We now use Fourier analysis and derive the estimate: 
\begin{align*}
\sum_{j \le J} \Delta x \, (e_j^n)^2 \le \sum_{j \in \Z} \Delta x \, (E_j^n)^2 
&\le \dfrac{1}{\Delta t^2} \, \sum_{j \in \Z} \dfrac{1}{\Delta x} \, \left( \int_{x_{j-1}}^{x_j} 
\left(U^{n-1} (y-a\, \Delta t) -\sum_{\ell=-r}^p a_\ell \, U^{n-1} (y+\ell \, \Delta x)\right) \, {\rm d}y \right)^2 \\
&\le \dfrac{1}{\Delta t^2} \, \int_\R \left( U^{n-1} (y-a\, \Delta t) -\sum_{\ell=-r}^p a_\ell \, 
U^{n-1} (y+\ell \, \Delta x) \right)^2 {\rm d}y \\
&\le \dfrac{1}{2\, \pi \, \Delta t^2} \, \int_\R \left| \widehat{U^{n-1}}(\xi) \right|^2 \, \left| 
{\rm e}^{-i\, \lambda \, a\, \Delta x \, \xi} -\sum_{\ell=-r}^p a_\ell \, {\rm e}^{i\, \ell\, \Delta x \, \xi} \right|^2 \, {\rm d}\xi \, . 
\end{align*}
We now use Assumption \ref{as:scheme} (recall $k_b=k$), to derive the bound:
$$
\left| {\rm e}^{-i\, \lambda \, a\, \Delta x \, \xi} -\sum_{\ell=-r}^p a_\ell \, {\rm e}^{i\, \ell\, \Delta x \, \xi} \right| 
\le C\, (\Delta x \, |\xi|)^{k_b+1} \, ,
$$
uniformly with respect to $\Delta x$ and $\xi \in \R$. Recalling that the ratio $\Delta t/\Delta x$ is kept fixed, 
we end up with the consistency estimate:
\begin{equation}
\label{estimerreur4}
\sum_{j \le J} \Delta x \, (e_j^n)^2 \le C \, \Delta x^{2\, k_b} \, \| U^{n-1} \|_{H^{k_b+1}(\R)}^2 
= C \, \Delta x^{2\, k_b} \, \| U_0 \|_{H^{k_b+1}(\R)}^2 
\le C \, \Delta x^{2\, k_b} \, \| u_0 \|_{H^{k_b+1}((-\infty,L))}^2 \, ,
\end{equation}
where we have used the fact that $U^{n-1}$ has been constructed as an extension of $u_0(\cdot-a\, t^{n-1})$ 
boundedly with respect to the $H^{k_b+1}$ norm. Going back to \eqref{estimerreur3} and using the estimate 
\eqref{estimerreur4}, we finally derive
\begin{equation*}
\sup_{n=0,\dots,N_T} \, \sum_{j \le J} \, \Delta x \, (\eps_j^n)^2 
\le C \, \Delta x^{2\, k_b} \, (T+T^2) \, \| u_0 \|_{H^{k_b+1}((-\infty,L))}^2 \, ,
\end{equation*}
which completes the proof of \eqref{estiml2}.
\end{proof}

\subsection{Convergence estimates for Dirichlet numerical boundary conditions}

This short paragraph is devoted to the complementary boundary value problem on a half-line that can be extracted from 
\eqref{transportnum}, \eqref{dirichlet}, \eqref{neumann}, and that focuses on the (homogeneous) Dirichlet boundary condition 
\eqref{dirichlet} at the inflow boundary $x=0$. Hence the half-line is now $\R^+$. To be consistent with the notations introduced 
in Section \ref{sect:2}, we let $x_j :=j \, \Delta x$, $j \in \Z$. The ghost cells correspond to the intervals $(x_{-r},x_{1-r}), \dots, 
(x_{-1},x_0)$, see Figure \ref{fig:maillage''}. The time step $\Delta t$ is linked to the space step by keeping $\Delta t/\Delta x 
=\lambda$, $\lambda$ being the same fixed positive constant as in Section \ref{sect:2}. We prove the following result, that is 
the analogue of Theorem \ref{thm3}.

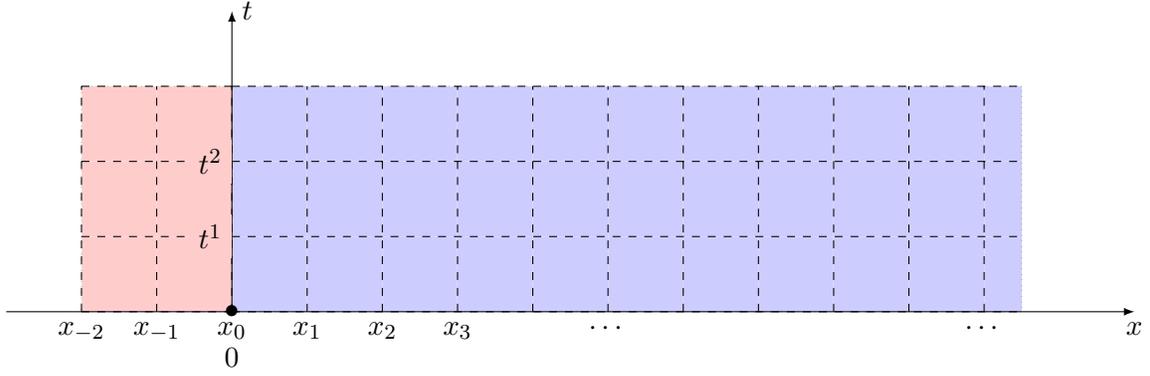
\begin{figure}[h!]
\begin{center}
\begin{tikzpicture}[scale=2,>=latex]
\draw [ultra thin, dotted, fill=blue!20] (0,0) rectangle (5.25,1.5);
\draw [thin, dashed, fill=white] (0,0) grid [step=0.5] (5.25,1.5);
\draw [ultra thin, dotted, fill=red!20] (-1,0) rectangle (0,1.5);
\draw [thin, dashed, fill=white] (-1,0) grid [step=0.5] (0,1.5);
\draw[black,->] (-1.5,0) -- (6,0) node[below] {$x$};
\draw[black,->] (0,0)--(0,2) node[right] {$t$};
\draw (0,0.5) node[left, fill=red!20]{$t^1$};
\draw (0,1) node[left, fill=red!20]{$t^2$};
\draw (-1,0) node[below]{$x_{-2}$};
\draw (-0.5,0) node[below]{$x_{-1}$};
\draw (0,0) node[below]{$x_0$};
\draw (0,-0.3) node {$0$};
\draw (0.5,0) node[below]{$x_1$};
\draw (1,0) node[below]{$x_2$};
\draw (1.5,0) node[below]{$x_3$};
\draw (2.5,0) node[below]{$\cdots$};
\draw (5,0) node[below]{$\cdots$};
\node (centre) at (0,0){$\bullet$};
\end{tikzpicture}
\caption{The mesh on $\R^+$ in blue, and the ``ghost cells'' in red ($r=2$ here).}
\label{fig:maillage''}
\end{center}
\end{figure}

\begin{proposition}
\label{prop2}
Let $a>0$, let $k \in \N^*$, let $\lambda > 0$. There exists $C>0$ such that for any $T>0$ and $J \in \N^*$, for any $L \geq 1$ 
and for any  $u_0 \in H^{k+1}((0,+\infty))$ satisfying the flatness conditions:
$$
u_0(0)=\cdots =u_0^{(k)}(0) =0 \, ,
$$
the solution $(u_j^n)_{j\ge 1,n \in \N}$ to the scheme: 
\begin{equation}
\label{schemademidroite'}
\begin{cases}
u_j^0=u_j^{\rm in} :=\dfrac{1}{\Delta x} \, \int_{x_{j-1}}^{x_j} u_0(y) \, {\rm d}y \, ,& j \ge 1, \\
u^{n}_\ell =0 \, ,& 0\leq n \leq T/\Delta t -1 \, ,\quad \ell=1-r,\dots,0 \, ,\\
u_j^{n+1} ={\displaystyle \sum_{\ell=-r}^p} a_\ell \, u_{j+\ell}^n \, ,& 0\leq n \leq T/\Delta t -1 \, ,\quad j \ge 1 \, ,\\
\end{cases}
\end{equation}
where the coefficients $a_\ell$ satisfy Assumption \ref{as:scheme} with integer $k$, satisfies 
\begin{equation}
\label{estiml2'}
\sup_{0 \le n \le T/\Delta t} \, \left( \sum_{j \ge 1} \, \Delta x \, \left( 
u_j^n -\dfrac{1}{\Delta x} \, \int_{x_{j-1}}^{x_j} u_0(y-a\, t^n) \, {\rm d}y \right)^2 \right)^{1/2} 
\le C \, T \, \Delta x^k \, \| u_0 \|_{H^{k+1}((0,+\infty))}  
\end{equation}
where $\Delta x = L/J$ and $\Delta t = \lambda \, \Delta x$. It is understood in \eqref{estiml2'} that the initial condition $u_0$ 
has been extended by $0$ to $\R^-$. In particular, the following global $\ell^\infty$ convergence estimate is satisfied:
\begin{equation}
\label{estimlinfty1'}
\sup_{0 \le n \le T/\Delta t} \, \sup_{j \ge 1} \, \left| u_j^n -\dfrac{1}{\Delta x} \, \int_{x_{j-1}}^{x_j} u_0(y-a\, t^n) \, {\rm d}y 
\right| \le C \, T \, \Delta x^{k-1/2} \, \| u_0 \|_{H^{k+1}((0,+\infty))} \, .
\end{equation}
\end{proposition}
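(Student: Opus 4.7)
The plan is to follow the same template as the proof of Theorem \ref{thm3}, adapted to the inflow Dirichlet setup, with a key simplification coming from the flatness conditions on $u_0$ at $x=0$. First I would extend $u_0$ by zero to $\R^-$ to obtain a function $U_0$ on $\R$. The flatness conditions $u_0(0)=\cdots=u_0^{(k)}(0)=0$ ensure that the zero extension lies in $H^{k+1}(\R)$ with $\|U_0\|_{H^{k+1}(\R)} \le C \, \|u_0\|_{H^{k+1}((0,+\infty))}$. I then define, for every $j \in \Z$,
$$
w_j^n := \dfrac{1}{\Delta x} \, \int_{x_{j-1}}^{x_j} U_0(y-a\, t^n) \, {\rm d}y \, ,
$$
which is the cell average of the exact transported solution on the whole real line. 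Since $a>0$ and $t^n \ge 0$, for every ghost index $\ell \in \{1-r,\dots,0\}$ and every $y \in (x_{\ell-1},x_\ell) \subset (-\infty,0]$ one has $y-a\, t^n \le 0$, hence $U_0(y-a\, t^n)=0$, so that $w_\ell^n=0$ at every ghost cell.

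Setting $\eps_j^n := u_j^n-w_j^n$ for $j \ge 1-r$, a direct computation (analogous to \eqref{erreurdemidroite}) shows that $\eps$ satisfies the same type of numerical scheme as \eqref{schemademidroite'} with zero initial condition, \emph{zero} boundary values at all ghost cells $\ell=1-r,\dots,0$, and an interior consistency error $e_j^{n+1}$ given by the right hand side of \eqref{consistencyerror} (now for $j\ge 1$). The vanishing of the boundary error is the key qualitative simplification compared with the outflow analysis of Theorem \ref{thm3}, and it is exactly what the flatness assumptions on $u_0$ are designed to provide.

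The stability argument then reduces to the $\ell^2$-contraction already used in the $k_b=0$ step of the proof of Theorem \ref{thm2}, transposed from $(-\infty,L)$ to $(0,+\infty)$: since the ghost values of $\eps$ vanish, extending $(\eps_j^n)_{j \ge 1-r}$ by zero to all of $\Z$ produces an element of $\ell^2(\Z)$, and the $\ell^2$-contraction of the interior update operator coming from \eqref{eq:stab}, combined with the triangle inequality on the Duhamel decomposition, yields
$$
\Big( \sum_{j \ge 1} \Delta x \, (\eps_j^{n+1})^2 \Big)^{1/2}
\le \Big( \sum_{j \ge 1} \Delta x \, (\eps_j^n)^2 \Big)^{1/2}
+ \Delta t \, \Big( \sum_{j \ge 1} \Delta x \, (e_j^{n+1})^2 \Big)^{1/2} \, .
$$
Since $\eps^0 \equiv 0$, iterating gives $\sup_{0 \le n \le T/\Delta t} \|\eps^n\|_{\ell^2(\Delta x)} \le T \, \sup_n \|e^n\|_{\ell^2(\Delta x)}$.

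It remains to bound $e^n$, and this is the very same Fourier computation as in the proof of Theorem \ref{thm3}: writing $U^{n-1}:=U_0(\cdot-a\, t^{n-1})$ and using Plancherel together with the symbol estimate
$$
\Big| {\rm e}^{-i\, \lambda \, a\, \Delta x \, \xi} -\sum_{\ell=-r}^p a_\ell \, {\rm e}^{i\, \ell\, \Delta x \, \xi} \Big|
\le C \, (\Delta x \, |\xi|)^{k+1}
$$
provided by Assumption \ref{as:scheme} with integer $k$, one gets
$$
\sum_{j \ge 1} \Delta x \, (e_j^n)^2 \le C \, \Delta x^{2\, k} \, \| U_0 \|_{H^{k+1}(\R)}^2
\le C \, \Delta x^{2\, k} \, \| u_0 \|_{H^{k+1}((0,+\infty))}^2 \, .
$$
Combining with the stability bound above yields \eqref{estiml2'}, and the $\ell^\infty$ estimate \eqref{estimlinfty1'} follows at once from the crude inequality $|b_j| \le \Delta x^{-1/2} \, \| b \|_{\ell^2(\Delta x)}$ already invoked at the beginning of the proof of Theorem \ref{thm3}. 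The only slightly non-routine ingredient is the $H^{k+1}(\R)$ regularity of the zero extension, which is precisely where the flatness conditions on $u_0$ are consumed; all the remaining steps are simplifications of arguments already carried out in the proofs of Theorem \ref{thm2} and Theorem \ref{thm3}.
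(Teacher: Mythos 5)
Your proposal is correct and follows essentially the same route the paper sketches for this proposition: zero extension of $u_0$ made possible by the flatness conditions, cell averages of the extended exact solution, an error system with zero initial data and homogeneous Dirichlet ghost values, and the Fourier consistency estimate from the proof of Theorem \ref{thm3}. Your direct one-step contraction plus triangle inequality is just an inlined version of the Duhamel/power-boundedness argument of Proposition \ref{prop1} that the paper invokes (valid here because, with homogeneous Dirichlet conditions, the truncated update operator is $\Pi\,T\,\Pi$ with $T$ a contraction), so the two arguments coincide in substance.
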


The proof follows more or less the exact same arguments as the proof of Theorem \ref{thm3}. One first defines the 
cell average of the exact solution $((t,x) \mapsto u_0(x-a\, t))$ to the transport equation in the half space $\R^+$ with 
homogeneous Dirichlet boundary condition at $x=0$. The flatness assumption on $u_0$ ensures that extending $u_0$ 
by $0$ on $\R^-$ yields a function in $H^{k+1}(\R)$. The error between the solution to \eqref{schemademidroite'} and 
the cell average of the exact solution will satisfy a system of the form \eqref{schemademidroite'} with \emph{zero} initial 
condition and \emph{homogeneous} Dirichlet boundary condition but with a nonzero interior forcing term (the so-called 
consistency error). Estimating this error is done by using a similar (and even easier) extension procedure as in the proof 
of Theorem \ref{thm3} and Fourier analysis. We leave the details to the interested reader. The stability estimate for 
\eqref{schemademidroite'} is analogous to that of Proposition \ref{prop1} and follows from Theorem \ref{thm2} (in the 
case $k_b=0$) since we have already observed that dealing with this case does not necessitate the consistency conditions 
\eqref{eq:consist} but only the stability assumption \eqref{eq:stab}.

\section{Proof of Theorem \ref{thm1}}
\label{sect:4}

\subsection{Stability estimates on a finite interval}\label{stable_operator}

We now turn to the study of the numerical scheme \eqref{transportnum}, \eqref{dirichlet}, \eqref{neumann}, 
which is an iteration in a finite dimensional space and therefore really corresponds to a numerical scheme 
that can be implemented in practice. Let us recall that the space step $\Delta x$ is given by $\Delta x =L/J$ 
with $J$ a positive integer, the spatial grid is defined by means of the points $x_j := j\, \Delta x$, $j \in \Z$, and 
the time step is given by $\Delta t =\lambda \, \Delta x$ with $\lambda$ a fixed positive constant. We first prove 
a stability estimate for \eqref{transportnum}, \eqref{dirichlet}, \eqref{neumann}, which will have various consequences.

\begin{proposition}
\label{prop3}
Let $a>0$, let $k\ge 1$ and $k_b \in \N$, let $\lambda > 0$. Then there exists a constant $C_0>0$ such that for all initial 
data $(f_1,\dots,f_J)$, the solution $(u_j^n)_{1 \le j \le J,n \in \N}$ to the scheme:
\begin{equation}
\label{schemainterval}
\begin{cases}
u_j^0=f_j \, ,& j=1,\dots,J, \\
u^{n}_\ell =0 \, ,& n \in \N \, ,\quad \ell=1-r,\dots,0 \, ,\\
(D_-^{k_b} u^{n})_{J+\ell} =0 \, ,& n \in \N \, ,\quad \ell=1,\dots,p \, ,\\
u_j^{n+1} ={\displaystyle \sum_{\ell=-r}^p} a_\ell \, u_{j+\ell}^n \, ,& n \in \N \, ,\quad j=1,\dots,J \, ,\\
\end{cases}
\end{equation}
where the coefficients $a_\ell$ satisfy Assumption \ref{as:scheme} with integer $k$, 
satisfies
\begin{equation}
\label{estiminterval}
\forall \, n \in \N \, ,\quad \left( \sum_{j=1}^J \Delta x \, (u_j^n)^2 \right)^{1/2} 
\le C_0 \, {\rm e}^{C_0 \, n \, \Delta t/L} \, \left( \sum_{j=1}^{J} \Delta x \, (f_j)^2 \right)^{1/2} \, .
\end{equation}
\end{proposition}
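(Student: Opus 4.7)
The plan is to reduce \eqref{estiminterval} to the half-line stability of Proposition~\ref{prop1} by extending the interval solution to a sequence on $(-\infty,J]$, and then iterating a one-window bound over successive time windows of length proportional to the transit time $L/a$. The desired exponential factor $e^{C_0\,n\Delta t/L}$ will appear as $K^{n\Delta t/T_0}$, where $K$ is the one-window amplification constant and $T_0\propto L/a$.

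I would first set $\tilde u_j^n := u_j^n$ for $1-r \le j \le J+p$ and $\tilde u_j^n := 0$ for $j<1-r$. The Dirichlet condition in \eqref{schemainterval} forces $u_\ell^n = 0$ for $\ell = 1-r,\dots,0$, so $\tilde u_j^n = 0$ for all $j \le 0$, and the Neumann condition at $j=J$ is inherited. A direct check shows that $(\tilde u_j^n)$ satisfies the half-line iteration $\tilde u_j^{n+1} = \sum_{\ell=-r}^p a_\ell \tilde u_{j+\ell}^n$ for $1\le j \le J$ (by the interval scheme) and for $j\le -p$ (both sides vanishing), but fails at the $p$ indices $j=1-p,\dots,0$. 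The discrepancy defines an interior source $F_j^{n+1}$, supported on those $p$ cells, expressible as an explicit linear combination of the near-inflow values $u_1^n,\dots,u_p^n$.

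Next I would set $T_0 := L/a$, $N_0 := \lfloor T_0/\Delta t\rfloor$, and apply Proposition~\ref{prop1} to $(\tilde u_j^n)$ on the window $0 \le n \le N_0$. The source term is controlled using the sharp time-$\ell^2$ trace estimate \eqref{estimation} of Theorem~\ref{thm2}, applied both at the outflow (directly from Theorem~\ref{thm2}) and at the inflow (by the analogous Dirichlet half-line argument underlying Proposition~\ref{prop2}, which yields the same kind of weighted trace bound for the values $u_1^n,\dots,u_p^n$). A Young-type absorption then closes the bound, producing the one-window estimate
\[
\sup_{0 \le n \le N_0}\,\Bigl(\sum_{j=1}^J \Delta x\,(u_j^n)^2\Bigr)^{1/2} \,\le\, K\,\Bigl(\sum_{j=1}^J \Delta x\,(f_j)^2\Bigr)^{1/2},
\]
with $K>0$ depending only on the scheme, $a$, and $\lambda$. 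Iterating this bound $q+1$ times for $n = qN_0 + s$ (restarting with the current state as initial datum for each new window) gives $\|u^n\|_{\ell^2_\Delta} \le K^{q+1}\|f\|_{\ell^2_\Delta} \le K\,e^{(a\log K/L)\,n\Delta t}\|f\|_{\ell^2_\Delta}$, which is exactly \eqref{estiminterval} with $C_0 := a\log K$.

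The hard part is the one-window estimate. Naively the interior source satisfies $\|F_j^n\|_{\ell^2_\Delta} \lesssim \Delta t^{-1}\max_{1\le \ell\le p}|u_\ell^n|$, and the $(N_0\Delta t)^2 \sim L^2$ prefactor in the crude form \eqref{estimation'} of Proposition~\ref{prop1} does not compensate for this inverse-$\Delta t$ scaling. Closing the loop therefore requires the finer $\Delta t$-weighted trace bounds \eqref{estimation} provided by Theorem~\ref{thm2}, together with the symmetric trace bound at the Dirichlet inflow, so that the scales match and the source-term contribution can be absorbed back into the left-hand side.
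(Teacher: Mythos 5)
Your outer strategy --- a one-window bound over a time window of duration proportional to $L/a$, followed by iteration of that bound to produce the factor ${\rm e}^{C_0\,n\,\Delta t/L}$ --- is exactly the structure of the paper's proof, and that part of your argument is sound. The gap is in the one-window estimate. You extend the interval solution by zero to the left and view it as a solution of the outflow half-line problem \eqref{pbauxk'} with an interior source supported on the $p$ cells $j=1-p,\dots,0$, of size $\sum_{j}\Delta x\,(F_j^{n+1})^2\sim \Delta t^{-1}\sum_{\ell=1}^p(u_\ell^n)^2$. You correctly note that the crude Duhamel bound \eqref{estimation'} cannot absorb the $\Delta t^{-1}$, and you propose to close the loop with the $\Delta t$-weighted trace estimates of Theorem \ref{thm2} at the outflow together with an analogous trace bound on $u_1^n,\dots,u_p^n$ at the inflow. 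But those trace estimates are statements about solutions of the respective \emph{half-line} problems, and the interval solution solves neither half-line problem exactly --- each is perturbed by the presence of the other boundary. Invoking an inflow trace bound for $u_1^n,\dots,u_p^n$ is therefore circular: that trace is precisely one of the unknowns you need to control. If you instead set up the coupled pair of inequalities (outflow energy estimate with the inflow trace as source, and vice versa), the source contributions enter at the same order as the quantities being estimated with no small parameter: Duhamel plus Cauchy--Schwarz over the $N_0\sim J$ steps of one window gives $\sum_{m\le n}\Delta t\,\|F^m\|_{\ell^2_\Delta}\lesssim n^{1/2}\big(\sum_m\Delta t\sum_\ell(u_\ell^m)^2\big)^{1/2}$, so even a perfect inflow trace bound by $\|f\|$ leaves a factor $J^{1/2}$. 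There is nothing for a ``Young-type absorption'' to bite on; the only mechanism that decouples the two boundaries over a window of $\sim J$ steps is finite speed of propagation.

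The paper's proof uses exactly that mechanism, but applied to the \emph{initial datum} rather than to the equation. One splits $f$ into the part supported in $j\le J/2$ and the part supported in $j>J/2$, and takes $N_0:=\min\big({\rm E}(J/(2p)),{\rm E}((J/2-k_b)/r)\big)$. For $n\le N_0$ the first piece, evolved by the interval scheme, coincides with the solution of the Dirichlet half-line problem \eqref{schemademidroite'} (its support, moving at most $r$ cells to the right per step, never reaches the cells entering the Neumann condition), while the second piece coincides with the solution of the Neumann half-line problem \eqref{schemademidroite} (its support, moving at most $p$ cells to the left per step, never reaches the Dirichlet ghost cells). Each half-line stability estimate then applies with \emph{no source term at all}, the one-window constant is independent of $J$ and $L$, and your iteration argument finishes the proof verbatim. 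Replacing your source-term reduction by this initial-data splitting repairs the proof; as written, the central step is not justified.
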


The estimate \eqref{estiminterval} is compatible with the limit $L \rightarrow +\infty$ (and $\Delta x$, $\Delta t$ 
fixed) where we formally recover either the stability estimate \eqref{estimation'} or its analogue for Dirichlet 
boundary conditions on a half-line, at least in the case of homogeneous boundary conditions and no interior source 
term. More important, the estimate \eqref{estiminterval} also has a consequence in terms of the spectral radius 
of the perturbed Toeplitz matrix associated with \eqref{schemainterval}. In the absence of the outflow boundary 
at $x=L$, the numerical scheme \eqref{schemademidroite'} with homogeneous Dirichlet boundary conditions at 
the inflow boundary $x=0$ corresponds to iterating the Toeplitz operator represented by the semi-infinite matrix:
$$
\begin{pmatrix}
a_0 & \cdots & a_p & 0 & \cdots & \cdots & \cdots \\
\vdots & \ddots & & \ddots & \ddots & & \\
a_{-r} & \cdots & a_0 & \cdots & a_p & \ddots & \\
0 & \ddots & & \ddots & & \ddots & \ddots \\
\vdots & \ddots & \ddots & & \ddots & & \ddots \\
\end{pmatrix} \, .
$$
Acting on $\ell^2(\N)$, the spectrum of this operator is known to contain at least the closed curve $\{ \sum 
a_\ell \exp (i\, \ell \, \theta),\\ \theta \in [0,2\, \pi] \}$, see \cite[Chapter 7]{TE} and references therein for 
a precise statement. In particular, Assumption \ref{as:scheme} and \cite[Theorem 7.1]{TE} show that the 
spectrum of this operator lies inside the closed unit disk $\{ z \in \C \, , \, |z| \le 1 \}$ and accumulates at 
the point $1$. Truncating the above semi-infinite matrix to make it of size $J$ corresponds to prescribing 
homogeneous Dirichlet boundary conditions at the outflow boundary. In that case, the resulting operator 
is of the form $\Pi \, T \, \Pi$ with $\Pi$ an orthogonal projection and $T$ a contraction. In particular, 
prescribing homogeneous Dirichlet boundary conditions at the outflow boundary leaves the spectrum 
(consisting now of finitely many eigenvalues) within the closed unit disk.

Prescribing Neumann numerical boundary conditions as in \eqref{schemainterval} corresponds to first truncating 
the semi-infinite matrix to make it of size $J$, and then making ``large'' perturbations of the coefficients in the 
$k_b$ last columns of the last $p$ rows of the truncated matrix. For instance, if $r=p=1$ and $k_b=1$, resp. 
$k_b=2$, the corresponding matrix reads:
$$
\begin{pmatrix}
a_0 & a_1 & 0 & \cdots & 0 \\
a_{-1} & a_0 & a_1 & \ddots & \vdots \\
0 & \ddots & \ddots & \ddots & 0 \\
\vdots & \ddots & \ddots & a_0 & a_1 \\
0 & \cdots & 0 & a_{-1} & a_0 +a_1 \end{pmatrix} \, , \quad \text{\rm resp.} \quad \begin{pmatrix}
a_0 & a_1 & 0 & \cdots & 0 \\
a_{-1} & a_0 & a_1 & \ddots & \vdots \\
0 & \ddots & \ddots & \ddots & 0 \\
\vdots & \ddots & a_{-1} & a_0 & a_1 \\
0 & \cdots & 0 & a_{-1}-a_1 & a_0 +2\, a_1 \end{pmatrix} \, .
$$
It is not obvious at first sight that the spectrum of the matrix ${\mathcal A}_J \in {\mathcal M}_J(\R)$ associated 
with the iteration \eqref{schemainterval} will not deviate from the closed unit disk at a distance $O(1)$, or even 
$O(J^{-\alpha})$ for some $\alpha \in (0,1)$, giving rise to instabilities for \eqref{schemainterval}. However, the 
estimate \eqref{estiminterval} has an important consequence for the matrix ${\mathcal A}_J$ since it shows in 
particular that its $\ell^2$ induced norm verifies
$$
\forall \, n \in \N \, ,\quad \| {\mathcal A}_J^n \|_2 \le C \, {\rm e}^{C \, n \, \Delta t/L} \, ,
$$
so taking the $1/n$-th power and passing to the limit (recalling $\Delta t/L=\lambda/J$, $\lambda>0$ constant), 
the spectral radius of ${\mathcal A}_J$ satisfies
$$
\rho ({\mathcal A}_J) \le \exp (C/J) \, .
$$
Hence the spectrum of ${\mathcal A}_J$ does not deviate from the closed unit disk at a distance larger than 
$O(1/J)$. (The bound in \eqref{estiminterval} is even more precise since it even predicts the behavior of the 
$\varepsilon$-pseudospectrum of ${\mathcal A}_J$, see \cite{TE}, prohibiting in particular Jordan blocks associated 
with eigenvalues of largest modulus.)

\begin{proof}[Proof of Proposition \ref{prop3}]
The derivation of \eqref{estiminterval} follows from the finite speed of propagation of the numerical scheme 
\eqref{transportnum}. (Observe that our argument below does not extend to implicit discretizations of the transport 
equation.) More precisely, let us assume for simplicity that $J$ is even. Let
$$
N_0 :=\min \left( {\rm E} \left( \dfrac{J}{2\, p} \right) \, , \, {\rm E} \left( \dfrac{J/2 -k_b}{r} \right) \right) \, .
$$
Then for $n \le N_0$, we can write the solution to \eqref{schemainterval} as the superposition of solutions 
to two initial boundary value problems of the form \eqref{schemademidroite} and \eqref{schemademidroite'}. 
Indeed, if the initial condition $(f_j = u_j^{\rm in})_{j \le J}$ in \eqref{schemademidroite} vanishes for $j \le J/2$, 
then the solution to \eqref{schemademidroite} satisfies the homogeneous Dirichlet boundary condition:
$$
\forall \, n \le N_0 \, ,\quad \forall \, \ell=1-r,\dots,0 \, ,\quad u_\ell^n =0 \, ,
$$
because the support of $u^{\rm in}$ is shifted of $p$ cells to the left at each time iteration. In particular, the 
restriction of the solution to the half-line problem \eqref{schemademidroite} with some initial condition $u^{\rm in}$ 
vanishing for $j \le J/2$ satisfies the numerical scheme \eqref{schemainterval} up to $n=N_0$. Similarly, if the 
initial condition $(u_j^{\rm in})_{j \ge 1}$ in \eqref{schemademidroite'} vanishes for $j \ge J/2+1$, then the solution 
to the half line problem \eqref{schemademidroite'} satisfies:
$$
\forall \, n \le N_0 \, ,\quad \forall \, \ell=1,\dots,p \, ,\quad (D_-^{k_b} \, u^n)_{J+\ell} =0 \, ,
$$
because the support of $u^{\rm in}$ is shifted of $r$ cells to the right at each time iteration, and the 
definition of $N_0$ ensures
$$
\dfrac{J}{2} +1+N_0 \, r \le J+1-k_b \, ,
$$
so the solution to \eqref{schemademidroite'} vanishes at all points that are used in the calculation of the 
finite differences $(D_-^{k_b} \, u^n)_{J+\ell}$. Truncating the initial condition for \eqref{schemainterval} 
to the left and to the right of the medium cell $J/2$, we can, thanks to the linearity of the scheme, 
combine the stability estimate \eqref{estimation'} and the analogous one for Dirichlet 
boundary conditions to show that the solution to \eqref{schemainterval} satisfies:
$$
\sup_{n=0,\dots,N_0} \, \sum_{j=1}^J \Delta x \, (u_j^n)^2 \le C \, \sum_{j=1}^{J} \Delta x \, (f_j)^2 
\, ,
$$
with a constant $C$ that is independent of $L$ and $J$. It then remains to iterate the latter estimate on 
each interval of integers $\{ 0,\dots,N_0 \}$, $\{ N_0,\dots,2\, N_0 \}$ and so on to derive \eqref{estiminterval}.
\end{proof}

\subsection{Convergence}

We now prove Theorem \ref{thm1}, whose proof combines the convergence estimates given by Theorem 
\ref{thm3} and Proposition \ref{prop2}, and the stability estimate of Proposition \ref{prop3}. Let us recall that 
we consider for the continuous problem {transport} an initial condition $u_0 \in H^{k+1}((0,L))$ that satisfies 
the flatness conditions:
$$
u_0(0) =\cdots =u_0^{(k)}(0)=0 \, .
$$
We therefore extend $u_0$ by zero on $\R^-$, considering now $u_0$ as an element of $H^{k+1}((-\infty,L))$ 
that vanishes on $\R^-$. Let us consider a ${\mathcal C}^\infty$ function $\chi$ on $\R$ that satisfies:
$$
\chi(x) =\begin{cases}
0 \, ,& \text{\rm if $x \le 1/3$,} \\
1 \, ,& \text{\rm if $x \ge 2/3$,}
\end{cases}
$$
and then decompose $u_0$ as follows:
$$
\forall \, x <L \, ,\quad u_0(x) =\underbrace{\chi (x/L) \, u_0 (x)}_{=: \, v_0(x)} 
+\underbrace{(1-\chi (x/L)) \, u_0 (x)}_{=: \, w_0(x)} \, .
$$
The function $v_0$ belongs to $H^{k+1}((-\infty,L))$ and vanishes for $x \le L/3$. Conversely, the function 
$w_0$ belongs to $H^{k+1}((-\infty,L))$ and vanishes for $x \ge 2\, L/3$. Moreover, using $L \ge 1$, there 
holds uniformly in $L$:
$$
\| v_0 \|_{H^{k+1}((-\infty,L))} \le C \, \| u_0 \|_{H^{k+1}((0,L))} \, ,\quad 
\| w_0 \|_{H^{k+1}((-\infty,L))} \le C \, \| u_0 \|_{H^{k+1}((0,L))} \, ,
$$
where $C$ depends only on the fixed function $\chi$ (and on $k$).

Let $N \in \N$ denote the largest integer such that
$$
N \, p \le \dfrac{J}{3} \, ,\quad \mbox{and} \quad N\, r \le \dfrac{J}{3} -k_b \, .
$$
We assume that $J$ is large enough so that $N \ge 1$. Applying the same finite speed of propagation 
argument as in the proof of Proposition \ref{prop3}, we can write
$$
\forall \, n=0,\dots,N \, ,\quad \forall \, j=1,\dots,J \, ,\quad u_j^n=v_j^n+w_j^n \, ,
$$
where $(v_j^n)_{j \le J,n=0,\dots,N}$ satisfies an iteration of the form \eqref{schemademidroite}, namely:
\begin{equation}
\label{schemavjn}
\begin{cases}
v_j^0=v_j^{\rm in} \, ,& j\le J \, ,\\
(D_-^{k_b} v^{n})_{J+\ell} =0 \, ,& n=0,\dots,N-1 \, ,\quad \ell=1,\dots,p \, ,\\
v_j^{n+1} ={\displaystyle \sum_{\ell=-r}^p} a_\ell \, v_{j+\ell}^n \, ,& n=0,\dots,N-1 \, ,\quad j \le J \, ,\\
\end{cases}
\end{equation}
and $(w_j^n)_{j \ge 1,n=0,\dots,N}$ satisfies an iteration of the form \eqref{schemademidroite'}, namely:
\begin{equation}
\label{schemawjn}
\begin{cases}
w_j^0=w_j^{\rm in} \, ,& j \ge 1 \, \\
w^{n}_\ell =0 \, ,& n=0,\dots,N-1 \, ,\quad \ell=1-r,\dots,0 \, ,\\
w_j^{n+1} ={\displaystyle \sum_{\ell=-r}^p} a_\ell \, w_{j+\ell}^n \, ,& n=0,\dots,N-1 \, ,\quad j \ge 1 \, .\\
\end{cases}
\end{equation}
The initial conditions in \eqref{schemavjn} and \eqref{schemawjn} are defined as follows. Following 
\eqref{initial1}, we define the discretized initial condition associated with $v_0$ for \eqref{schemavjn} by 
letting\footnote{Recall here that $u_0$ has been extended (by zero) to $\R^-$ so $v_0$ is well defined 
on $(-\infty,L)$.}:
\begin{equation*}
\forall \, j \le J \, ,\quad v_j^{\rm in} :=\dfrac{1}{\Delta x} \, \int_{x_{j-1}}^{x_j} v_0(y) \, {\rm d}y \, .
\end{equation*}
To define the initial condition for \eqref{schemawjn}, let us recall that $1-\chi(\cdot/L)$ vanishes on the 
interval $(2\, L/3,L)$ so we can extend $w_0$ by zero to $(L,+\infty)$ and view $w_0$ as an element 
of $H^{k+1}((0, \infty))$ that satisfies the flatness conditions of Proposition \ref{prop2}. We then define 
the initial condition for \eqref{schemawjn} by letting:
\begin{equation*}
\forall \, j \ge 1 \, ,\quad w_j^{\rm in} :=\dfrac{1}{\Delta x} \, \int_{x_{j-1}}^{x_j} w_0(y) \, {\rm d}y \, .
\end{equation*}

We can apply Theorem \ref{thm3} to \eqref{schemavjn} and get the convergence estimate:
\begin{multline}
\sup_{0 \le n \le N} \, \left( \sum_{j=1}^J \, \Delta x \, \left( 
v_j^n -\dfrac{1}{\Delta x} \, \int_{x_{j-1}}^{x_j} v_0(y-a\, t^n) \, {\rm d}y \right)^2 \right)^{1/2} \\
\le C \, (\sqrt{N\, \Delta t} + N\, \Delta t) \, \Delta x^{k_0} \, \| v_0 \|_{H^{k+1}((-\infty,L))} \\
\le C \, (\sqrt{N\, \Delta t} + N\, \Delta t) \, \Delta x^{k_0} \, \| u_0 \|_{H^{k+1}((0,L))} \, ,\label{estimthm1-1}
\end{multline}
with $k_0:=\min (k,k_b)$ and where the estimate is uniform with respect to $L$. Applying Proposition 
\ref{prop2} to \eqref{schemawjn}, we get the other convergence estimate:
$$
\sup_{0 \le n \le N} \, \left( \sum_{j=1}^J \, \Delta x \, \left( 
w_j^n -\dfrac{1}{\Delta x} \, \int_{x_{j-1}}^{x_j} w_0(y-a\, t^n) \, {\rm d}y \right)^2 \right)^{1/2} 
\le C \, N \, \Delta t \, \Delta x^k \, \| u_0 \|_{H^{k+1}((0,L))} \, ,
$$
which we combine with \eqref{estimthm1-1} to derive:
\begin{multline}
\label{estimthm1-2}
\sup_{0 \le n \le N} \, \left( \sum_{j=1}^J \, \Delta x \, \left( 
u_j^n -\dfrac{1}{\Delta x} \, \int_{x_{j-1}}^{x_j} u_0(y-a\, t^n) \, {\rm d}y \right)^2 \right)^{1/2}  \\
\le C_1 \, (\sqrt{N\, \Delta t} + N\, \Delta t) \, \Delta x^{k_0} \, \| u_0 \|_{H^{k+1}((0,L))} \, .
\end{multline}
Here we use a specific notation $C_1$ for the constant in \eqref{estimthm1-2} in order to emphasize 
its role in the concluding argument below (in the same way, we use a specific notation $C_0$ for the 
constant in the stability estimate \eqref{estiminterval}).

It remains more or less to iterate in time \eqref{estimthm1-2}. Namely, at the time level $N$, \eqref{estimthm1-2} 
shows that we can write:
$$
\forall \, j=1,\dots,J \, ,\quad u_j^N =\dfrac{1}{\Delta x} \, \int_{x_{j-1}}^{x_j} u_0(y-a\, t^N) \, {\rm d}y +\eps_j \, ,
$$
with
$$
\left( \sum_{j=1}^J \, \Delta x \, \eps_j^2 \right)^{1/2} 
\le C_1 \, (\sqrt{N\, \Delta t} + N\, \Delta t)  \, \Delta x^{k_0} \, \| u_0 \|_{H^{k+1}((0,L))} \, .
$$
To iterate the process, we define $(u_j^{N,n})_{1 \le j \le J, 0 \le n N}$ by 
\[
\begin{cases}
\displaystyle{} u_j^{N,0} = \frac{1}{\Delta x} \int_{x_{j-1}}^{x_j} u_0(y - a t^N)\,{\rm d}y \, \, ,& j=1,\dots,J, \\
u^{N,n}_\ell =0 \, ,& 0 \le n \le N-1 \, ,\quad \ell=1-r,\dots,0 \, ,\\
(D_-^{k_b} u^{N,n})_{J+\ell} =0 \, ,& 0 \le n \le N-1 \, ,\quad \ell=1,\dots,p \, ,\\
u_j^{N,n+1} ={\displaystyle \sum_{\ell=-r}^p} a_\ell \, u_{j+\ell}^{N,n} \, ,& 0 \le n \le N-1 \, ,\quad j=1,\dots,J \, .\\
\end{cases}
\]
Note that $u_0(\cdot - a \, t^N)$ satisfies the flatness conditions of Theorem \ref{thm1}, and that:
$$
\| u_0(\cdot - a t^N) \|_{H^{k+1}((0,L))} \le \| u_0 \|_{H^{k+1}((0,L))} \, .
$$
Thus, on the one side, \eqref{estimthm1-2} applies and we have 
\begin{multline*}
\sup_{0 \le n \le N} \, \left( \sum_{j=1}^J \, \Delta x \, \left( 
u_j^{N,n} -\dfrac{1}{\Delta x} \, \int_{x_{j-1}}^{x_j} u_0(y-a(t^N + t^n) \, {\rm d}y \right)^2 \right)^{1/2} \\
\le C_1 \, (\sqrt{N\, \Delta t} + N\, \Delta t) \, \Delta x^{k_0} \, \| u_0 \|_{H^{k+1}((0,L))} \, .
\end{multline*}
On the other side, the sequence $(\delta_j^n := u_j^{N+n} - u_j^{N,n})_{1 \le j \le J, 0 \le n N}$ satisfies: 
\[
\begin{cases}
\delta_j^{0} = \eps_j \, ,& j=1,\dots,J, \\
\delta^{n}_\ell =0 \, ,& 0 \le n \le N-1 \, ,\quad \ell=1-r,\dots,0 \, ,\\
(D_-^{k_b} \delta^{n})_{J+\ell} =0 \, ,& 0 \le n \le N-1 \, ,\quad \ell=1,\dots,p \, ,\\
\delta_j^{n+1} ={\displaystyle \sum_{\ell=-r}^p} a_\ell \, \delta_{j+\ell}^{n} \, ,& 0 \le n \le N-1 \, ,\quad j=1,\dots,J \, \\
\end{cases}
\]
so that, thanks to the stability estimate \eqref{estiminterval} of Proposition \ref{prop3}, for any $n \leq N$, 
\begin{multline*}
\left( \sum_{j=1}^J \Delta x \, (\delta_j^n)^2 \right)^{1/2} 
\le C_0 \, {\rm e}^{C_0 \, n \, \Delta t/L} \, \left( \sum_{j=1}^{J} \Delta x \, (\eps_j)^2 \right)^{1/2} \\
\leq C_0 \, {\rm e}^{C_0 \, n \, \Delta t/L} \, C_1 \, (\sqrt{N\, \Delta t} + N\, \Delta t) \, \Delta x^{k_0} \, \| u_0 \|_{H^{k+1}((0,L))} \, .
\end{multline*}

Finally, thanks to the triangle inequality, 
\begin{multline*}
\sup_{0 \le n \le N} \, \left( \sum_{j=1}^J \, \Delta x \, \left( 
u_j^{N+n} -\dfrac{1}{\Delta x} \, \int_{x_{j-1}}^{x_j} u_0(y-a(t^N + t^n) \, {\rm d}y \right)^2 \right)^{1/2} \\
\le C_1 (1 + C_0 \, {\rm e}^{C_0 \, N \, \Delta t/L}) \, (\sqrt{N\, \Delta t} + N\, \Delta t) \, 
\Delta x^{k_0} \, \| u_0 \|_{H^{k+1}((0,L))} \, .
\end{multline*}

Applying iteratively the same argument, we end up with
\begin{align*}
\forall \, m \in \N \, ,\quad \sup_{0 \le n \le m\, N} \, & \left( \sum_{j=1}^J \, \Delta x \, \left( 
u_j^n -\dfrac{1}{\Delta x} \, \int_{x_{j-1}}^{x_j} u_0(y-a\, t^n) \, {\rm d}y \right)^2 \right)^{1/2} \\
&\le C_1 \,\sum_{\mu=0}^{m-1} \left( C_0 \, {\rm e}^{C_0 \, N \, \Delta t/L} \right)^\mu 
(\sqrt{N\, \Delta t} + N\, \Delta t) \, \Delta x^{k_0} \, \| u_0 \|_{H^{k+1}((0,L))} \\
&\le C_1 \, C_0^m \, {\rm e}^{C_0 \, m\, N \, \Delta t/L} \, (\sqrt{N\, \Delta t} + N\, \Delta t) \,  \Delta x^{k_0} \, 
\| u_0 \|_{H^{k+1}((0,L))} \, ,
\end{align*}
where we have assumed $C_0 \ge 2$ without loss of generality. It remains to choose $m:={\rm E} (N\Delta t/T)+1$, 
which by definition of $N$ is uniformly bounded with respect to $J$ ($N$ scales like $c \, J$ with $c>0$ constant, and 
$\Delta t$ scales like $c'/J$), and we end up with:
$$
\sup_{0 \le n \le T/\Delta t} \, \left( \sum_{j=1}^J \, \Delta x \, \left( 
u_j^n -\dfrac{1}{\Delta x} \, \int_{x_{j-1}}^{x_j} u_0(y-a\, t^n) \, {\rm d}y \right)^2 \right)^{1/2} 
\le C \, (\sqrt{T}+T) \, {\rm e}^{C\, T/L} \, \Delta x^{k_0} \, \| u_0 \|_{H^{k+1}((0,L))} \, .
$$
The convergence estimate \eqref{estimlinfty} of Theorem \ref{thm1} follows by a direct lower bound for the 
norm on the left hand side.

\section{Numerical experiments}

In this section, we illustrate with numerical examples the results proved in the paper. 
We consider the second order in time and space Lax-Wendroff scheme, that writes 
\begin{equation}
\label{laxwendroff}
\frac{u_j^{n+1} - u_j^n}{\Delta t} +a \, \frac{u_{j+1}^n - u_{j-1}^n}{2 \, \Delta x} = 
\frac{a^2 \, \Delta t}{2} \, \frac{u_{j+1}^n - 2 \, u_j^n + u_{j-1}^n}{\Delta x^2}, \quad n \in \N \, , \quad j = 1,\dots,J \, ,
\end{equation}
which means, in the present formalism, that $r = p = 1$ and $a_{-1} =(a^2 \, \lambda^2 +a \, \lambda)/2$, 
$a_0 =1-a^2 \, \lambda^2$, $a_1 =(a^2 \, \lambda^2 -a \, \lambda)/2$ and $k = 2$. This scheme is stable 
(over $\Z$) and second order accurate provided that $\lambda \, |a| \leq 1$, and we choose 
$a=1$ and 
$\lambda = 0.7$ in the numerical simulations reported below.

We compare the results with the first order Neumann outflow condition ($k_b = 1$) and the second order one 
($k_b = 2$), for various initial conditions. Namely, we consider $L=1$ for the length of the space interval and 
we choose as initial data, 
\[
\begin{array}{l}
u^{0,1}(x) = ((x - 1/2)^+)^3, \quad x \in (0,1) \, , \\
u^{0,2}(x) = ((x - 1/2)^+)^{2.6}, \quad x \in (0,1) \, , \\
u^{0,3}(x) = ((x - 1/2)^+)^{2.5}, \quad x \in (0,1) \, ,
\end{array}
\]
which satisfy $u^{0,1} \in H^{3}$, $u^{0,2} \in H^{3}$ and $u^{0,3} \in H^{3-\epsilon}$ for any $\epsilon > 0$. The 
final time is $T = 0.5$ (after $T$ the exact solution vanishes). The convergence result provided in this paper applies 
for the initial data $u^{0,1}$ and $u^{0,2}$, and forecasts that the $l^\infty$ error is bounded by $C \, \Delta x^{1.5}$ 
if $k_b = 2$ and $C \, \Delta x^{0.5}$ if $k_b=1$.

Figure \ref{figini} represents the initial condition $u^{0,2}$ on a grid with $40$ cells on $(0, 1)$. On figure \ref{fig025} 
and \ref{fig05}, we plot the numerical solutions (at different times) obtained with $k_b = 0$ (homogeneous Dirichlet 
outflow condition), $k_b = 1$ (homogeneous Neumann outflow condition) and $k_b = 2$ (homogeneous ``second order" 
Neumann outflow condition). As expected, the Dirichlet condition shows a larger boundary layer, and, especially at time 
$0.2625$, the solution with $k_b = 2$ is much nearer to the exact solution than the others. 

Let us now analyze more precisely the error of the schemes. 
In the following, the computed ``error" components actually are not the 
$u_j^n - \int_{x_{j-1}}^{x_j} u_0(y - a t^n) \, {\rm d}y/\Delta x$: 
for simplicity, they are replaced with the local values $u_j^n - u_0((x_{j-1} + x_{j})/2 - at^n)$. If $u_0 \in H^2$, 
$| u_0((x_{j-1} + x_{j})/2 - at^n) - \int_{x_{j-1}}^{x_j} u_0(y - a t^n) \, {\rm d}y/\Delta x |$ can be bounded by a constant times 
$\Delta x^2$, thus the present formula will not disrupt our evaluation of the order of the scheme, which cannot be greater 
than 2. 
By observing Table \ref{u1}, we see that:
\begin{itemize}
 \item with $k_b = 2$ the order of the scheme indeed seems to be $2$,
 \item while with $k_b = 1$, it seems to be $1$.
\end{itemize}
In Table \ref{u2} with a less smooth (but still in $H^3$) initial datum,
\begin{itemize}
 \item with $k_b = 2$ the order of the scheme seems to belong to $(1.7,1.75)$,
 \item and with $k_b = 1$, it seems to be $1$.
\end{itemize}
At last, Table \ref{u3} suggests that:
\begin{itemize}
 \item with $k_b = 2$ the order of the scheme be in $(1.65,1.7)$,
 \item and with $k_b = 1$, the order still be $1$.
\end{itemize}

Clearly the convergence result we have obtained here seem to be non-optimal and there is hope, for numerical schemes 
such as \eqref{laxwendroff} that fit into the framework of \cite{bbjfc}, to fill this discrepancy by using numerical boundary 
layer expansions. However, one clearly observes that the convergence order does depend not only on the smoothness 
of the solution but also on the order of the homogeneous Neumann condition for the outflow boundary.

In order to illustrate the discussion about the stability of the scheme \eqref{schemainterval} and the properties of the matrix 
${\mathcal A}_J \in {\mathcal M}_J(\R)$ (Section \ref{stable_operator}), we report in Table \ref{norms} the spectral radius and 
the $l^2$ induced norm of this matrix (both are computed approximately thanks to the functions \texttt{eig} and \texttt{norm} 
of the library numpy.linalg in the Python language), with respect to the boundary condition and the number of cells. We observe 
that in all cases the spectral radius is smaller than 1, but that with the second order Neumann condition, the norm of the matrix 
is larger than $1$. This means that stability estimates cannot be obtained by showing that the $\ell^2$-norm does not increase, 
hence the need for some well-designed analytical tool (here we have used an induction argument with respect to $k_b$ 
combined with the finite speed of propagation).

\begin{figure}
\begin{center}
\setlength{\unitlength}{0.240900pt}
\ifx\plotpoint\undefined\newsavebox{\plotpoint}\fi
\begin{picture}(1500,900)(0,0)
\sbox{\plotpoint}{\rule[-0.200pt]{0.400pt}{0.400pt}}%
\put(130.0,82.0){\rule[-0.200pt]{315.338pt}{0.400pt}}
\put(130.0,82.0){\rule[-0.200pt]{4.818pt}{0.400pt}}
\put(110,82){\makebox(0,0)[r]{$0$}}
\put(1419.0,82.0){\rule[-0.200pt]{4.818pt}{0.400pt}}
\put(130.0,179.0){\rule[-0.200pt]{315.338pt}{0.400pt}}
\put(130.0,179.0){\rule[-0.200pt]{4.818pt}{0.400pt}}
\put(110,179){\makebox(0,0)[r]{$0.02$}}
\put(1419.0,179.0){\rule[-0.200pt]{4.818pt}{0.400pt}}
\put(130.0,276.0){\rule[-0.200pt]{315.338pt}{0.400pt}}
\put(130.0,276.0){\rule[-0.200pt]{4.818pt}{0.400pt}}
\put(110,276){\makebox(0,0)[r]{$0.04$}}
\put(1419.0,276.0){\rule[-0.200pt]{4.818pt}{0.400pt}}
\put(130.0,373.0){\rule[-0.200pt]{315.338pt}{0.400pt}}
\put(130.0,373.0){\rule[-0.200pt]{4.818pt}{0.400pt}}
\put(110,373){\makebox(0,0)[r]{$0.06$}}
\put(1419.0,373.0){\rule[-0.200pt]{4.818pt}{0.400pt}}
\put(130.0,471.0){\rule[-0.200pt]{315.338pt}{0.400pt}}
\put(130.0,471.0){\rule[-0.200pt]{4.818pt}{0.400pt}}
\put(110,471){\makebox(0,0)[r]{$0.08$}}
\put(1419.0,471.0){\rule[-0.200pt]{4.818pt}{0.400pt}}
\put(130.0,568.0){\rule[-0.200pt]{315.338pt}{0.400pt}}
\put(130.0,568.0){\rule[-0.200pt]{4.818pt}{0.400pt}}
\put(110,568){\makebox(0,0)[r]{$0.1$}}
\put(1419.0,568.0){\rule[-0.200pt]{4.818pt}{0.400pt}}
\put(130.0,665.0){\rule[-0.200pt]{315.338pt}{0.400pt}}
\put(130.0,665.0){\rule[-0.200pt]{4.818pt}{0.400pt}}
\put(110,665){\makebox(0,0)[r]{$0.12$}}
\put(1419.0,665.0){\rule[-0.200pt]{4.818pt}{0.400pt}}
\put(130.0,762.0){\rule[-0.200pt]{315.338pt}{0.400pt}}
\put(130.0,762.0){\rule[-0.200pt]{4.818pt}{0.400pt}}
\put(110,762){\makebox(0,0)[r]{$0.14$}}
\put(1419.0,762.0){\rule[-0.200pt]{4.818pt}{0.400pt}}
\put(130.0,859.0){\rule[-0.200pt]{315.338pt}{0.400pt}}
\put(130.0,859.0){\rule[-0.200pt]{4.818pt}{0.400pt}}
\put(110,859){\makebox(0,0)[r]{$0.16$}}
\put(1419.0,859.0){\rule[-0.200pt]{4.818pt}{0.400pt}}
\put(130.0,82.0){\rule[-0.200pt]{0.400pt}{187.179pt}}
\put(130.0,82.0){\rule[-0.200pt]{0.400pt}{4.818pt}}
\put(130,41){\makebox(0,0){$0$}}
\put(130.0,839.0){\rule[-0.200pt]{0.400pt}{4.818pt}}
\put(392.0,82.0){\rule[-0.200pt]{0.400pt}{153.935pt}}
\put(392.0,762.0){\rule[-0.200pt]{0.400pt}{23.367pt}}
\put(392.0,82.0){\rule[-0.200pt]{0.400pt}{4.818pt}}
\put(392,41){\makebox(0,0){$0.2$}}
\put(392.0,839.0){\rule[-0.200pt]{0.400pt}{4.818pt}}
\put(654.0,82.0){\rule[-0.200pt]{0.400pt}{153.935pt}}
\put(654.0,762.0){\rule[-0.200pt]{0.400pt}{23.367pt}}
\put(654.0,82.0){\rule[-0.200pt]{0.400pt}{4.818pt}}
\put(654,41){\makebox(0,0){$0.4$}}
\put(654.0,839.0){\rule[-0.200pt]{0.400pt}{4.818pt}}
\put(915.0,82.0){\rule[-0.200pt]{0.400pt}{187.179pt}}
\put(915.0,82.0){\rule[-0.200pt]{0.400pt}{4.818pt}}
\put(915,41){\makebox(0,0){$0.6$}}
\put(915.0,839.0){\rule[-0.200pt]{0.400pt}{4.818pt}}
\put(1177.0,82.0){\rule[-0.200pt]{0.400pt}{187.179pt}}
\put(1177.0,82.0){\rule[-0.200pt]{0.400pt}{4.818pt}}
\put(1177,41){\makebox(0,0){$0.8$}}
\put(1177.0,839.0){\rule[-0.200pt]{0.400pt}{4.818pt}}
\put(1439.0,82.0){\rule[-0.200pt]{0.400pt}{187.179pt}}
\put(1439.0,82.0){\rule[-0.200pt]{0.400pt}{4.818pt}}
\put(1439,41){\makebox(0,0){$1$}}
\put(1439.0,839.0){\rule[-0.200pt]{0.400pt}{4.818pt}}
\put(130.0,82.0){\rule[-0.200pt]{0.400pt}{187.179pt}}
\put(130.0,82.0){\rule[-0.200pt]{315.338pt}{0.400pt}}
\put(1439.0,82.0){\rule[-0.200pt]{0.400pt}{187.179pt}}
\put(130.0,859.0){\rule[-0.200pt]{315.338pt}{0.400pt}}
\put(645,741){\makebox(0,0)[r]{Initial condition $u^{0,2}$}}
\put(665.0,741.0){\rule[-0.200pt]{24.090pt}{0.400pt}}
\put(146,82){\usebox{\plotpoint}}
\put(801,81.67){\rule{7.950pt}{0.400pt}}
\multiput(801.00,81.17)(16.500,1.000){2}{\rule{3.975pt}{0.400pt}}
\multiput(834.00,83.61)(6.937,0.447){3}{\rule{4.367pt}{0.108pt}}
\multiput(834.00,82.17)(22.937,3.000){2}{\rule{2.183pt}{0.400pt}}
\multiput(866.00,86.59)(3.604,0.477){7}{\rule{2.740pt}{0.115pt}}
\multiput(866.00,85.17)(27.313,5.000){2}{\rule{1.370pt}{0.400pt}}
\multiput(899.00,91.59)(2.145,0.488){13}{\rule{1.750pt}{0.117pt}}
\multiput(899.00,90.17)(29.368,8.000){2}{\rule{0.875pt}{0.400pt}}
\multiput(932.00,99.58)(1.486,0.492){19}{\rule{1.264pt}{0.118pt}}
\multiput(932.00,98.17)(29.377,11.000){2}{\rule{0.632pt}{0.400pt}}
\multiput(964.00,110.58)(1.113,0.494){27}{\rule{0.980pt}{0.119pt}}
\multiput(964.00,109.17)(30.966,15.000){2}{\rule{0.490pt}{0.400pt}}
\multiput(997.00,125.58)(0.829,0.496){37}{\rule{0.760pt}{0.119pt}}
\multiput(997.00,124.17)(31.423,20.000){2}{\rule{0.380pt}{0.400pt}}
\multiput(1030.00,145.58)(0.689,0.496){45}{\rule{0.650pt}{0.120pt}}
\multiput(1030.00,144.17)(31.651,24.000){2}{\rule{0.325pt}{0.400pt}}
\multiput(1063.00,169.58)(0.551,0.497){55}{\rule{0.541pt}{0.120pt}}
\multiput(1063.00,168.17)(30.876,29.000){2}{\rule{0.271pt}{0.400pt}}
\multiput(1095.58,198.00)(0.497,0.514){63}{\rule{0.120pt}{0.512pt}}
\multiput(1094.17,198.00)(33.000,32.937){2}{\rule{0.400pt}{0.256pt}}
\multiput(1128.58,232.00)(0.497,0.606){63}{\rule{0.120pt}{0.585pt}}
\multiput(1127.17,232.00)(33.000,38.786){2}{\rule{0.400pt}{0.292pt}}
\multiput(1161.58,272.00)(0.497,0.698){63}{\rule{0.120pt}{0.658pt}}
\multiput(1160.17,272.00)(33.000,44.635){2}{\rule{0.400pt}{0.329pt}}
\multiput(1194.58,318.00)(0.497,0.815){61}{\rule{0.120pt}{0.750pt}}
\multiput(1193.17,318.00)(32.000,50.443){2}{\rule{0.400pt}{0.375pt}}
\multiput(1226.58,370.00)(0.497,0.897){63}{\rule{0.120pt}{0.815pt}}
\multiput(1225.17,370.00)(33.000,57.308){2}{\rule{0.400pt}{0.408pt}}
\multiput(1259.58,429.00)(0.497,1.004){63}{\rule{0.120pt}{0.900pt}}
\multiput(1258.17,429.00)(33.000,64.132){2}{\rule{0.400pt}{0.450pt}}
\multiput(1292.58,495.00)(0.497,1.147){61}{\rule{0.120pt}{1.013pt}}
\multiput(1291.17,495.00)(32.000,70.899){2}{\rule{0.400pt}{0.506pt}}
\multiput(1324.58,568.00)(0.497,1.219){63}{\rule{0.120pt}{1.070pt}}
\multiput(1323.17,568.00)(33.000,77.780){2}{\rule{0.400pt}{0.535pt}}
\multiput(1357.58,648.00)(0.497,1.341){63}{\rule{0.120pt}{1.167pt}}
\multiput(1356.17,648.00)(33.000,85.579){2}{\rule{0.400pt}{0.583pt}}
\multiput(1390.58,736.00)(0.497,1.464){63}{\rule{0.120pt}{1.264pt}}
\multiput(1389.17,736.00)(33.000,93.377){2}{\rule{0.400pt}{0.632pt}}
\put(146,82){\raisebox{-.8pt}{\makebox(0,0){$\Box$}}}
\put(179,82){\raisebox{-.8pt}{\makebox(0,0){$\Box$}}}
\put(212,82){\raisebox{-.8pt}{\makebox(0,0){$\Box$}}}
\put(245,82){\raisebox{-.8pt}{\makebox(0,0){$\Box$}}}
\put(277,82){\raisebox{-.8pt}{\makebox(0,0){$\Box$}}}
\put(310,82){\raisebox{-.8pt}{\makebox(0,0){$\Box$}}}
\put(343,82){\raisebox{-.8pt}{\makebox(0,0){$\Box$}}}
\put(375,82){\raisebox{-.8pt}{\makebox(0,0){$\Box$}}}
\put(408,82){\raisebox{-.8pt}{\makebox(0,0){$\Box$}}}
\put(441,82){\raisebox{-.8pt}{\makebox(0,0){$\Box$}}}
\put(474,82){\raisebox{-.8pt}{\makebox(0,0){$\Box$}}}
\put(506,82){\raisebox{-.8pt}{\makebox(0,0){$\Box$}}}
\put(539,82){\raisebox{-.8pt}{\makebox(0,0){$\Box$}}}
\put(572,82){\raisebox{-.8pt}{\makebox(0,0){$\Box$}}}
\put(605,82){\raisebox{-.8pt}{\makebox(0,0){$\Box$}}}
\put(637,82){\raisebox{-.8pt}{\makebox(0,0){$\Box$}}}
\put(670,82){\raisebox{-.8pt}{\makebox(0,0){$\Box$}}}
\put(703,82){\raisebox{-.8pt}{\makebox(0,0){$\Box$}}}
\put(735,82){\raisebox{-.8pt}{\makebox(0,0){$\Box$}}}
\put(768,82){\raisebox{-.8pt}{\makebox(0,0){$\Box$}}}
\put(801,82){\raisebox{-.8pt}{\makebox(0,0){$\Box$}}}
\put(834,83){\raisebox{-.8pt}{\makebox(0,0){$\Box$}}}
\put(866,86){\raisebox{-.8pt}{\makebox(0,0){$\Box$}}}
\put(899,91){\raisebox{-.8pt}{\makebox(0,0){$\Box$}}}
\put(932,99){\raisebox{-.8pt}{\makebox(0,0){$\Box$}}}
\put(964,110){\raisebox{-.8pt}{\makebox(0,0){$\Box$}}}
\put(997,125){\raisebox{-.8pt}{\makebox(0,0){$\Box$}}}
\put(1030,145){\raisebox{-.8pt}{\makebox(0,0){$\Box$}}}
\put(1063,169){\raisebox{-.8pt}{\makebox(0,0){$\Box$}}}
\put(1095,198){\raisebox{-.8pt}{\makebox(0,0){$\Box$}}}
\put(1128,232){\raisebox{-.8pt}{\makebox(0,0){$\Box$}}}
\put(1161,272){\raisebox{-.8pt}{\makebox(0,0){$\Box$}}}
\put(1194,318){\raisebox{-.8pt}{\makebox(0,0){$\Box$}}}
\put(1226,370){\raisebox{-.8pt}{\makebox(0,0){$\Box$}}}
\put(1259,429){\raisebox{-.8pt}{\makebox(0,0){$\Box$}}}
\put(1292,495){\raisebox{-.8pt}{\makebox(0,0){$\Box$}}}
\put(1324,568){\raisebox{-.8pt}{\makebox(0,0){$\Box$}}}
\put(1357,648){\raisebox{-.8pt}{\makebox(0,0){$\Box$}}}
\put(1390,736){\raisebox{-.8pt}{\makebox(0,0){$\Box$}}}
\put(1423,832){\raisebox{-.8pt}{\makebox(0,0){$\Box$}}}
\put(715,741){\raisebox{-.8pt}{\makebox(0,0){$\Box$}}}
\put(146.0,82.0){\rule[-0.200pt]{157.789pt}{0.400pt}}
\put(130.0,82.0){\rule[-0.200pt]{0.400pt}{187.179pt}}
\put(130.0,82.0){\rule[-0.200pt]{315.338pt}{0.400pt}}
\put(1439.0,82.0){\rule[-0.200pt]{0.400pt}{187.179pt}}
\put(130.0,859.0){\rule[-0.200pt]{315.338pt}{0.400pt}}
\end{picture}
\end{center}
\caption{Initial condition $u^{0,2}$ with $40$ cells in $(0, 1)$. }\label{figini}
\end{figure}
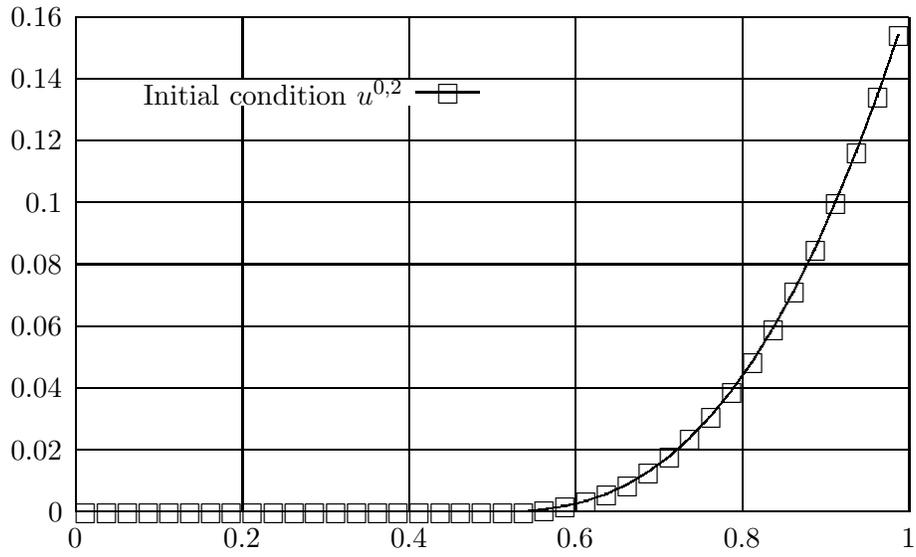

\begin{figure}
\begin{center}
\input{26_025}
\end{center}
\caption{Numerical and exact solutions at time $t = 0.2625$ with initial condition $u^{0,2}$ ($40$ cells). }
\label{fig025}
\end{figure}

\begin{figure}
\begin{center}
\input{26_05.tex}
\end{center}
\caption{Numerical and exact solutions at time $t = 0.5075$ with initial condition $u^{0,2}$ ($40$ cells). }
\label{fig05}
\end{figure}

\begin{table}[H]
\centering 
\begin{tabular}{|c|c|c|}
\hline
Number of cells $J$ & Measured error with $k_b = 2$ & Measured error with $k_b = 1$ \\
\hline
10 & 0.0025305 & 0.00833660625\\
\hline
20 & 0.0008281875 & 0.00491559140625\\
\hline
40 & 0.0002314921875 & 0.00262908841699\\
\hline
80 & 0.0000609287109375 & 0.0013994637865\\
\hline
160 & 0.0000156141357422 & 0.000720704311203\\
\hline
320 & 0.00000397348640443 & 0.000365563075521\\
\hline
640 & 0.00000100290833469 & 0.00018408024467\\
\hline
1280 & 0.000000251919175326 & 0.0000923642961781\\
\hline
\end{tabular} 
\caption{Error for datum $u^{0,1}$. }\label{u1}
\end{table}

\begin{table}[H]
\centering 
\begin{tabular}{|c|c|c|}
\hline
Number of cells $J$ & Measured error with $k_b = 2$ & Measured error with $k_b = 1$\\
\hline
10 & 0.00280385837572 & 0.0102978586289\\
\hline
20 & 0.000825428449649 & 0.00578352637669\\
\hline
40 & 0.000252680165957 & 0.00308529222599\\
\hline
80 & 0.0000781474537246 & 0.00161972927959\\
\hline
160 & 0.0000236164563317 & 0.000828965994226\\
\hline
320 & 0.00000711489098145 & 0.000419239010994\\
\hline
640 & 0.00000213591643874 & 0.000210806199835\\
\hline
1280 & 0.000000643052172999 & 0.000105699491246\\
\hline
\end{tabular} 
\caption{Error for datum $u^{0,2}$. }\label{u2}
\end{table}

\begin{table}[H]
\centering 
\begin{tabular}{|c|c|c|}
\hline
Number of cells $J$ & Measured error with $k_b = 2$ & Measured error with $k_b = 1$\\
\hline
10 & 0.00284239926561 & 0.0108072887024\\
\hline
20 & 0.00091837995271 & 0.00600083229228\\
\hline
40 & 0.000301806292425 & 0.00319976806911\\
\hline
80 & 0.0000975906472619 & 0.00167418222795\\
\hline
160 & 0.0000308167600202 & 0.000855523358729\\
\hline
320 & 0.00000972494981438 & 0.00043235384157\\
\hline
640 & 0.00000308448727156 & 0.000217323081725\\
\hline
1280 & 0.000000971185766911 & 0.000108947852591\\
\hline
\end{tabular} 
\caption{Error for datum $u^{0,3}$. }\label{u3}
\end{table}

\begin{table}[H]
\centering 
\begin{tabular}{|c|c|c|c|c|}
\hline
$J$ & Spectral radius, $k_b = 1$ & $l^2$ norm, $k_b = 1$ & Spectral radius, $k_b = 2$ & $l^2$ norm, $k_b = 2$\\
\hline
20 & 0.7100 & 0.9999 & 0.7098 & 1.0035 \\
\hline
80 & 0.74300 & 0.9999 & 0.7513 & 1.0035 \\
\hline
320 & 0.9208 & 0.9999 & 0.9212 & 1.0035 \\
\hline
1280 & 0.9817 & 0.9999 & 0.9805 & 1.0035 \\
\hline
\end{tabular} 
\caption{Spectral radii and $l^2$ induced norms of the linear operator associated with the scheme. }\label{norms}
\end{table}


\newpage
\appendix
\section{A discrete integration by parts lemma}
\label{appA}

In this appendix, we prove the following result, of which Lemma \ref{ippdiscrete} is an immediate corollary 
as explained below.

\begin{lemma}
\label{decompformequad}
Let $S \in {\mathcal M}_m(\R)$, $m \ge 2$, be a real symmetric matrix satisfying
$$
\sum_{i,j=1}^m S_{ij} =0 \, .
$$
Then there exists a unique real symmetric matrix $\widetilde{S}$ of size $m-1$, and some unique real 
numbers $d_1,\dots,d_{m-1}$, such that:
\begin{multline*}
S =\begin{pmatrix}
0 & 0 & \cdots & 0 \\
0 & & & & \\
\vdots & & \widetilde{S} & \\
0 & & & \end{pmatrix} -\begin{pmatrix}
 & & & 0 \\
 & \widetilde{S} & & \vdots \\
 & & & 0 \\
0 & \cdots & 0 & 0 \end{pmatrix} \\
+d_1 \, \begin{pmatrix}
1 & -1 & 0 & \cdots & 0 \\
-1 & 1 & \vdots & & \vdots \\
0 & \cdots & 0 & & \vdots \\
\vdots & & & & \vdots \\
0 & \cdots & \cdots & \cdots & 0 \end{pmatrix} +\cdots +d_{m-1} \, \begin{pmatrix}
1 & 0 & \cdots & 0 & -1 \\
0 & 0 & & 0 & 0 \\
\vdots & & & & \vdots \\
0 & 0 & & 0 & 0 \\
-1 & 0 & \cdots & 0 & 1 \end{pmatrix} \, .
\end{multline*}
\end{lemma}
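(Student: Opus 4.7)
The plan is to recast the identity as a linear isomorphism between two finite-dimensional vector spaces, proving existence and uniqueness jointly by combining injectivity with a dimension count. Let $V := \{ S \in \mathrm{Sym}_m(\R) : \sum_{i,j} S_{ij} = 0 \}$ denote the subspace of symmetric matrices singled out by the hypothesis, and set $W := \mathrm{Sym}_{m-1}(\R) \times \R^{m-1}$. The right-hand side of the decomposition defines a linear map $\Phi : W \to \mathrm{Sym}_m(\R)$ sending $(\tilde{S}, d_1, \dots, d_{m-1})$ to the claimed expression, and the lemma reduces to showing that $\Phi$ is a bijection onto $V$.

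First I would verify that $\Phi$ indeed takes values in $V$: each boundary matrix is of the form $(e_1 - e_{j+1})(e_1 - e_{j+1})^\top$, whose entries sum to zero, while the two block embeddings of $\tilde{S}$ (padding with a zero row and column on either end) have equal total sums, so their difference also sums to zero. A direct count then gives $\dim V = m(m+1)/2 - 1 = m(m-1)/2 + (m-1) = \dim W$, so it will suffice to establish the injectivity of $\Phi$.

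The heart of the proof is this injectivity. Assuming $\Phi(\tilde{S}, d) = 0$, I would extract the vanishing entry by entry, walking inward from the corners. The $(1, m)$ entry of $\Phi(\tilde{S}, d)$ reduces to $-d_{m-1}$, which forces $d_{m-1} = 0$; the $(m, m)$ entry and the $(m, k)$ entries for $2 \leq k \leq m-1$ then force the entire last row of $\tilde{S}$ to vanish, and the last column follows by the symmetry of $\tilde{S}$. For $2 \leq i, j \leq m-1$ with $i \neq j$, the $(i, j)$ entry equals $\tilde{S}_{i-1, j-1} - \tilde{S}_{i, j}$, yielding the off-diagonal shift identity $\tilde{S}_{k, l} = \tilde{S}_{k+1, l+1}$, which iterated along anti-diagonals propagates the boundary vanishing to every off-diagonal entry of $\tilde{S}$. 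The $(1, k)$ entries then give $d_{k-1} = -\tilde{S}_{1, k} = 0$ for $2 \leq k \leq m-1$, and finally the diagonal $(i, i)$ entries read $\tilde{S}_{i-1, i-1} - \tilde{S}_{i, i} + d_{i-1}$, which, once the $d_j$'s are known to vanish, reduce to the shift identity on the diagonal and propagate $\tilde{S}_{m-1, m-1} = 0$ to every diagonal entry.

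The only delicate point is the order of operations in this peeling argument: $d_{m-1}$ must be pinned down first, then the last row of $\tilde{S}$, then the off-diagonal entries of $\tilde{S}$ via the shift identity, then the remaining $d_j$'s (which couple to the first row of $\tilde{S}$), and only after that the diagonal of $\tilde{S}$ (whose equations mix in the $d_j$'s). The zero-sum hypothesis on $S$ never enters the injectivity step directly; it is encoded by the choice of target space $V$ and is used only through the dimension match that converts injectivity into bijectivity.
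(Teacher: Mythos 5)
Your proposal is correct and follows essentially the same strategy as the paper: both reduce the statement to the injectivity of the linear map $(\widetilde{S},d_1,\dots,d_{m-1})\mapsto$ right-hand side via the dimension count $\dim V=m(m+1)/2-1=(m-1)(m+2)/2=\dim W$, and both establish injectivity by peeling entries starting from the $(1,m)$ corner and the last row/column. The only difference is cosmetic: the paper closes the injectivity argument by induction on $m$ once the last column is cleared, whereas you unroll that induction into explicit shift identities $\widetilde{S}_{k,l}=\widetilde{S}_{k+1,l+1}$ propagated along the diagonals (not anti-diagonals, as you write) of $\widetilde{S}$.
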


\begin{proof}
The proof is completely elementary. The vector space of real symmetric matrices of size $m$ satisfying the 
condition
$$
\sum_{i,j=1}^m S_{ij} =0 \, ,
$$
has dimension $m\, (m+1)/2 -1=(m-1)\, (m+2)/2$. By standard linear algebra, it is therefore sufficient to prove 
that if a real symmetric matrix $\widetilde{S}$ of size $m-1$ and some real numbers $d_1,\dots,d_{m-1}$ 
satisfy:
\begin{multline}
\label{lemappA}
\begin{pmatrix}
0 & 0 & \cdots & 0 \\
0 & & & & \\
\vdots & & \widetilde{S} & \\
0 & & & \end{pmatrix} -\begin{pmatrix}
 & & & 0 \\
 & \widetilde{S} & & \vdots \\
 & & & 0 \\
0 & \cdots & 0 & 0 \end{pmatrix} \\
+d_1 \, \begin{pmatrix}
1 & -1 & 0 & \cdots & 0 \\
-1 & 1 & \vdots & & \vdots \\
0 & \cdots & 0 & & \vdots \\
\vdots & & & & \vdots \\
0 & \cdots & \cdots & \cdots & 0 \end{pmatrix} +\cdots +d_{m-1} \, \begin{pmatrix}
1 & 0 & \cdots & 0 & -1 \\
0 & 0 & & 0 & 0 \\
\vdots & & & & \vdots \\
0 & 0 & & 0 & 0 \\
-1 & 0 & \cdots & 0 & 1 \end{pmatrix} =0 \, ,
\end{multline}
then $\widetilde{S}=0$ and $d_1=\cdots=d_{m-1}=0$. Let us therefore assume that $\widetilde{S}$ and 
$d_1,\dots,d_{m-1}$ satisfy \eqref{lemappA}. Then considering the upper right coefficient, we first get 
$d_{m-1}=0$. Considering then the last column, we get $\widetilde{S}_{i,m-1}=0$ for all $i=1,\dots,m-1$. 
The proof follows by induction on $m$.
\end{proof}

\noindent Let us now explain how Lemma \ref{decompformequad} gives the result claimed in Lemma 
\ref{ippdiscrete}. On $\R^{p+r+1}$, with vectors written under the form $(v_{-r},\dots,v_p)$, we consider 
the quadratic form
$$
(v_{-r},\dots,v_p) \longmapsto 
2\, v_0 \, \Big( \sum_{\ell=-r}^p a_\ell \, v_\ell -v_0 \Big) +\Big( \sum_{\ell=-r}^p a_\ell \, v_\ell -v_0 \Big)^2 \, ,
$$
Because of \eqref{eq:consist} for $m=0$, the vector $(1,\dots,1)$ belongs to the isotropic cone of this 
quadratic form. Hence we can decompose the real symmetric matrix $S$ associated with this quadratic 
form by using Lemma \ref{decompformequad}. We get a decomposition of the form
\begin{equation*}
2\, v_0 \, \Big( \sum_{\ell=-r}^p a_\ell \, v_\ell -v_0 \Big) +\Big( \sum_{\ell=-r}^p a_\ell \, v_\ell -v_0 \Big)^2 
=\sum_{\ell=1}^{p+r} d_\ell \, (v_{\ell-r}-v_{-r})^2 
+\widetilde{{\mathcal Q}} \big( v_{1-r},\dots,v_p \big) -\widetilde{{\mathcal Q}} \big( v_{-r},\dots,v_{p-1} \big) \, ,
\end{equation*}
for some suitable real quadratic form $\widetilde{{\mathcal Q}}$ on $\R^{p+r}$. It only remains to make 
the invertible change of variables
$$
(v_{1-r},\dots,v_p) \longmapsto (v_{2-r}-v_{1-r},\dots,v_0-v_{-1},v_0,v_1-v_0,\dots,v_p-v_{p-1}) \, ,
$$
in the argument of $\widetilde{{\mathcal Q}}$, which modifies this quadratic form into some ${\mathcal Q}$, 
and we obtain the result of Lemma \ref{ippdiscrete} as announced. The value of ${\mathcal Q}$ on the 
$r$-th vector of the canonical basis of $\R^{p+r}$ is obtained by considering the specific sequence:
$$
\forall \, j \in \Z \, ,\quad v_j:=j \, ,
$$
and by identifying the dominant term in $j$.

\bibliographystyle{alpha}
\bibliography{CL}
\end{document}